\numberwithin{equation}{section}
\theoremstyle{plain}
\newtheorem{thm}{Theorem}
\newtheorem{proposition}{Proposition}
\newtheorem{corollary}{Corollary}
\newtheorem{lemma}{Lemma}
\newtheorem{assumption}{Assumption}
\theoremstyle{definition}
\theoremstyle{remark}
\newtheorem{remark}{Remark}
\newtheorem{example}{Example}
\begin{document}
\title[Variable selection based on norms of projections]{Variable selection in high-dimensional additive models based on norms of projections}
\author{Martin Wahl}
\address{Institut f\"{u}r Angewandte Mathematik, Universit\"{a}t Heidelberg,
              Im Neuenheimer Feld 294, 69120 Heidelberg, Germany}
\email{wahl@uni-heidelberg.de} 
\begin{abstract}
We consider the problem of variable selection in high-dimensional sparse additive models. We focus on the case that the
components belong to nonparametric classes of functions. The proposed method is motivated by geometric considerations in Hilbert spaces and consists of comparing the norms of the projections of the data onto various additive subspaces. Under minimal geometric assumptions, we prove concentration inequalities which lead to new conditions under which consistent variable selection is possible. As an application, we establish conditions under which a single component can be estimated with the rate of convergence corresponding to the situation in which the other components are known.
\end{abstract}
\keywords{Variable selection, model selection, additive model, geometry of Hilbert spaces, structured random matrices, dimension reduction, component estimation}
\subjclass[2010]{62G05, 62G08, 94A12}

\maketitle
\section{Introduction}
In this paper, we consider the problem of variable selection in high-dimensional nonparametric additive models in which the number of covariates is much larger than the number of observations. We study these models under the assumption that most components are equal to zero.

High-dimensional linear models have been investigated intensively in the literature. A great deal of attention has been given to the Lasso (see, e.g., the book by B\"{u}hlmann and van de Geer \cite{BvG} and the references therein). The Lasso is based on $l_1$-penalization, and can be used for both estimation and variable selection. There is also a huge literature on estimation and variable selection via $l_0$-penalization. These procedures can be found, e.g., in the book by Massart \cite{M}, where a general approach to model selection via penalization is developed (see also the work by Barron, Birg\'{e}, and Massart \cite{BBM} and the references therein). Finally, there is a third approach which is based on exponentially weighted aggregation (see, e.g., the work by Rigollet and Tsybakov \cite{RTs} and Arias-Castro and Lounici \cite{ACL} and the references therein).

More recently, high-dimensional additive models have been studied, e.g., in the work by Meier, van de Geer, and B\"{u}hlmann \cite{MGB}, Huang, Horowitz, and Wei \cite{HHW}, Koltchinskii and Yuan \cite{KY}, Raskutti, Wainwright, and Yu \cite{RWY}, Gayraud and Ingster \cite{GI}, Suzuki and Sugiyama \cite{SuSu}, and Dalalyan, Ingster, and Tsybakov \cite{DIT}. 
One approach generalizes the (group) Lasso and combines sparsity penalties with smoothness penalties or constraints (see \cite{MGB,HHW,KY,RWY,SuSu}). As in the case of the Lasso, these procedures can be used for both estimation and variable selection (see \cite{MGB,HHW}). Another approach based on exponential aggregation is developed in the work by Dalalyan, Ingster, and Tsybakov \cite{DIT}. They considered the problem of estimation in a more general model which they called the compound model and which includes the additive model as a special case.
In a Gaussian white noise setting, they showed that their estimator achieves non-asymptotic minimax rates of convergence.

Comminges and Dalalyan \cite{CD} considered the problem of variable selection in a high-dimensional Gaussian white noise model, and established tight conditions which make the estimation of the relevant variables possible.
They also extended their method to a high-dimensional random regression model, but they assumed that the joint density of all covariates is known.
Similar results were obtained earlier by Wainwright \cite{Wa} for high-dimensional linear models with Gaussian measurement matrices.

Several results in the theory of high-dimensional statistical inference are initiated by achievements in the theory of compressive sensing (see, e.g., the introductory book chapters by Fornasier and Rauhut \cite{ra10} and Rauhut \cite{R} and the references therein). A popular method is the $l_1$-minimization which enables sparse recovery if the measurement matrix satisfies, for instance, a restricted isometry property (RIP). It is known that several random matrices satisfy the RIP with probability close to one,  important examples being the Gaussian random matrices and the so-called structured random matrices (see, e.g., the work by Cand\`{e}s and Tao \cite{CT2}, Baraniuk, Davenport, DeVore, and Walkin \cite{BDDW}, and Rauhut \cite{R}). These results were generalized to high-dimensional linear models by Cand\`{e}s and Tao \cite{CT} (see also the work by Bickel, Ritov, and Tsybakov \cite{BRT} and the book by Koltchinskii \cite[Chapters 7 and 8]{Ko}).

In this paper, we study a method for variable selection which consists of comparing the norms of the projections of the data onto various finite-dimensional additive subspaces. Given an upper bound $q^*$ for the number of nonzero components, the procedure selects the subset of cardinality smaller than or equal to $q^*$ which best explains the data in the finite sample setting. The basis of this procedure is a selection criterion in the population setting which works well under the essential assumption that the minimal angles between various disjoint additive subspaces are bounded away from zero.
Applying this assumption and tools from the theory of structured random matrices, we derive a strong uniform concentration property of the empirical norm around the $L^2(\mathbb{P}^X)$-norm, which, in the special case of independent covariates, can be rewritten as a restricted (block)-isometry property. This property enables us to carry over the geometry in the population setting to the finite sample setting, and thus leads to an analysis of our procedure. 
Our results are of theoretical interest. Under minimal geometric assumptions, we prove upper bounds for the probability that our procedure misses relevant variables. These concentration inequalities lead to conditions making consistent estimation of the relevant variables possible. In the case of the linear model with random measurement and also in settings considered in the theory of compressive sensing, these conditions coincide with what can be usually found in the literature (see, e.g., \cite{Wa,R}).
In the general case of the nonparametric additive model, we find conditions which are, to the best of our knowledge, new.
As an application of our variable selection procedure, we consider the problem of estimating single components.
We establish conditions under which a single component can be estimated with the rate of convergence corresponding to the situation in which the other components are known. This is of interest since the rate of convergence valid for the whole regression function can be considerably smaller if the number of nonzero components is large or if the component of interest is smoother than the other components.

The paper is organized as follows. In Section \ref{mrs}, we present the main assumption and discuss a selection criterion in the population setting. Moreover, we propose our selection criterion and state a first version of our main result in Theorem \ref{mta}.
In Section \ref{grip}, we complete our main result by proving a uniform concentration property of the empirical norm. Moreover, we apply our result to the problem of estimating a single component. Section \ref{opt} is devoted to the analysis of the finite sample geometry, which is the main part in the proof of Theorem \ref{mta}. Finally, some technical parts of the proofs are given in the Appendix.
\section{The main result}\label{mrs}
\subsection{The variable selection problem}
Let $(Y,X)$ be a pair of random variables such that $X=(X_1,\dots,X_q)^T$ and
\begin{equation}\label{eq:sam}
Y=\sum_{j=1}^qf_j(X_j)+\epsilon,
\end{equation}
where the $X_j$ are real-valued random variables, the $f_j$ are unknown functions which are contained in $L^2(\mathbb{P}^{X_j})$, and $\epsilon$ is a Gaussian random variable with expectation $0$ and variance $\sigma^2$ which is independent of $X$. Moreover, we suppose that $f_j$ satisfies $\mathbb{E}[f_j(X_j)]=0$ for $j=1,\dots,q-1$. We denote by $f$ the whole regression function given by $f(x)=\sum_{j=1}^qf_j(x_j)$. 
We assume that we observe $n$ independent copies $(Y^1,X^1),\dots,(Y^n,X^n)$ of $(Y,X)$, i.e.,
\begin{equation}
Y^i=\sum_{j=1}^qf_j(X^i_j)+\epsilon^i,\ \ \ i=1,\dots,n.
\end{equation}  
The number of covariates $q$ can be much larger than the number of observations $n$, but we assume that the number of non-zero components is smaller than $n$. Thus we consider a high-dimensional sparse additive model. We define
$J_0=\left\{j\in\{1,\dots,q\}:\|f_j\|>0\right\}$, meaning that we have $f(x)=\sum_{j\in J_0}f_j(x_j)$. Moreover, we denote by $s$ the cardinality of $J_0$, i.e., $s=|J_0|$. The set $J_0$ is supposed to be unknown, but we assume that we are given an integer $q^*$ such that $|J_0|\leq q^*$. We aim at selecting a subset of cardinality smaller than or equal to $q^*$ which contains $J_0$.

\subsection{The main assumption} Without any further assumption, the components are not necessarily uniquely determined. In this section, we give an assumption which implies uniqueness and furthermore makes the variable selection task accessible. We define $H_q=L^2(\mathbb{P}^{X_q})$ and  
\[H_j=\left\{h_j\in L^2(\mathbb{P}^{X_j})|\mathbb{E}\left[h_j(X_j)\right]=0\right\}\]
for $j=1,\dots,q-1$.
Note that $f_j\in H_j$. The spaces $H_j$ are all canonically contained in $L^2(\mathbb{P}^X)$ which is a Hilbert space with the inner product $\langle g, h\rangle=\mathbb{E}[g(X)h(X)]$ and the corresponding norm $\|g\|=\sqrt{\langle g, g\rangle}$. Moreover, for $J\subseteq\lbrace1,\dots,q\rbrace$, we define
\[H_J=\sum_{j\in J}H_j\]
(with the convention that $H_J=0$ if $J=\emptyset$). 
\begin{assumption}\label{anglesep}
There exists a constant $0\leq\rho <1$ such that for all subsets $J_1,J_2\subseteq\lbrace1,\dots,q\rbrace$ satisfying $J_1\cap J_2=\emptyset$ and $\vert J_1\vert, \vert J_2\vert\leq q^*$, we have
\begin{equation}\label{eq:anglesepeq}
\langle h_{J_1},h_{J_2}\rangle\leq \rho \Vert h_{J_1}\Vert\Vert h_{J_2}\Vert
\end{equation}
for all $h_{J_1}\in H_{J_1}$, $h_{J_2}\in H_{J_2}$. 
\end{assumption} 
It follows from the fact that the spaces $H_j$ are closed combined with Assumption \ref{anglesep} and \cite[Theorem 1a]{K} (applied inductively) that all spaces $H_J$ with $J\subseteq\lbrace1,\dots,q\rbrace$ and $|J|\leq 2q^*$ are closed. The real number
\begin{equation*}
 \rho_0(H_{J_1},H_{J_2})=\sup\left\lbrace\frac{\langle h_{J_1},h_{J_2}\rangle}{\Vert h_{J_1}\Vert\Vert h_{J_2}\Vert}\bigg|0\neq h_{J_1}\in H_{J_1},0\neq h_{J_2}
 \in H_{J_2}\right\rbrace
\end{equation*}
is the cosine of the minimal angle between $H_{J_1}$ and $H_{J_2}$ (see, e.g., \cite[Definition 1]{KW}). Letting $\rho_{q^*}=\max\rho_0(H_{J_1},H_{J_2})$, where the maximum is taken over all subsets $J_1,J_2\subseteq\lbrace1,\dots,q\rbrace$ satisfying $J_1\cap J_2=\emptyset$ and $\vert J_1\vert, \vert J_2\vert\leq q^*$, then Assumption \ref{anglesep} says that $\rho_{q^*}<1$. By a simple argument which is given in Appendix \ref{efa}, one can show that Assumption \ref{anglesep} can be written as follows:

\begin{remark}[Equivalent form of Assumption \ref{anglesep}]\label{angeqsin}
For all subsets $J_1,J_2\subseteq\lbrace1,\dots,q\rbrace$ satisfying $J_1\cap J_2=\emptyset$ and $\vert J_1\vert, \vert J_2\vert\leq q^*$, we have
\begin{equation}\label{eq:mmm}
\left\| h_{J_1}+h_{J_2}\right\|^2\geq (1-\rho_{q^*}^2)\left\| h_{J_1}\right\|^2
\end{equation}
for all $h_{J_1}\in H_{J_1}$, $h_{J_2}\in H_{J_2}$.
\end{remark}
Remark \ref{angeqsin} shows that Assumption \ref{anglesep} is essential for variable selection: if \eqref{eq:mmm} does not hold, then it is possible that $f$ is arbitrary close to a sparse additive function which is based on a completely different set of variables.
From \eqref{eq:mmm} and the definition of $J_0$, we obtain:
\begin{lemma}\label{minlemma} Let Assumption \ref{anglesep} be satisfied. Then
\[\kappa:=\min_{\emptyset\neq J\subseteq J_0}\Big\Vert \sum_{j\in J}f_j\Big\Vert^2>0.\]
\end{lemma}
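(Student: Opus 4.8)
The plan is to reduce the minimum, which ranges over the finitely many nonempty subsets $J$ of the finite set $J_0$, to a pointwise positivity statement for each such $J$, and then to derive that positivity directly from the equivalent form of Assumption~\ref{anglesep} recorded in Remark~\ref{angeqsin}. Throughout I may assume $J_0\neq\emptyset$, since otherwise the minimum is over the empty family and there is nothing to prove.

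Concretely, I would fix a nonempty $J\subseteq J_0$ and single out some index $j_0\in J$. Put $J_1=\{j_0\}$ and $J_2=J\setminus\{j_0\}$. These are disjoint, and since $J\subseteq J_0$ we have $|J_1|=1\le q^*$ and $|J_2|\le s-1\le q^*$, so the pair $(J_1,J_2)$ is admissible in Remark~\ref{angeqsin}. Applying \eqref{eq:mmm} with $h_{J_1}=f_{j_0}\in H_{j_0}=H_{J_1}$ and $h_{J_2}=\sum_{j\in J\setminus\{j_0\}}f_j\in H_{J_2}$ yields
\[\Big\|\sum_{j\in J}f_j\Big\|^2=\big\|f_{j_0}+h_{J_2}\big\|^2\ge\big(1-\rho_{q^*}^2\big)\,\|f_{j_0}\|^2.\]
Because $j_0\in J_0$ we have $\|f_{j_0}\|>0$ by the definition of $J_0$, and $\rho_{q^*}<1$ by Assumption~\ref{anglesep}; hence the right-hand side is strictly positive, so $\|\sum_{j\in J}f_j\|^2>0$ for every nonempty $J\subseteq J_0$.

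Finally, $J_0$ has finite cardinality $s$, so the minimum defining $\kappa$ is a minimum over a finite nonempty collection of strictly positive numbers; it is therefore attained and strictly positive, which is the claim. There is no genuine obstacle in this argument: the only thing to be careful about is the cardinality bookkeeping guaranteeing that $(J_1,J_2)$ meets the hypotheses of Remark~\ref{angeqsin}. In fact the same argument gives the slightly more informative bound $\kappa\ge(1-\rho_{q^*}^2)\min_{j\in J_0}\|f_j\|^2$, which one could record if it is needed later.
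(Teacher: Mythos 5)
Your argument is correct and is exactly the one the paper intends: the lemma is stated as an immediate consequence of \eqref{eq:mmm} and the definition of $J_0$, which is precisely the splitting $J=\{j_0\}\cup(J\setminus\{j_0\})$ you carry out, followed by finiteness of the minimum. The explicit bound $\kappa\ge(1-\rho_{q^*}^2)\min_{j\in J_0}\|f_j\|^2$ you record is a correct and slightly more quantitative version of the same observation.
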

For $J\subseteq\lbrace1,\dots,q\rbrace$ let $\Pi_{H_J}$ be the orthogonal projection from $L^2(\mathbb{P}^X)$ to $H_J$. In the following we abbreviate $\Pi_{H_J}$ as $\Pi_{J}$.
Since projections lower the norm, the set $J_0$ maximizes the quantity $\left\Vert \Pi_{J}f\right\Vert^2$. If Assumption \ref{anglesep} holds, the following Lemma shows that $\left\Vert \Pi_{J_0}f\right\Vert^2-\left\Vert \Pi_{J}f\right\Vert^2$ is strictly positive for all subsets $J\subseteq\lbrace1,\dots,q\rbrace$ with $\vert J\vert\leq q^*$ and $J_0\setminus J\neq\emptyset$. This means that a subset $J\subseteq\lbrace1,\dots,q\rbrace$ with $\vert J\vert\leq q^*$ which maximizes $\left\Vert \Pi_{J}f\right\Vert^2$ always contains $J_0$ (and is  equal to $J_0$ in the special case when $|J_0|=q^*$). These observations will be the theoretical basis for our selection criterion in the finite sample setting.  
\begin{proposition}\label{fundlem} 
Let Assumption \ref{anglesep} be satisfied, and let $J\subseteq\lbrace1,\dots,q\rbrace$ be a subset such that $\vert J\vert\leq q^*$ and $J_0\setminus J\neq\emptyset$. Then 
\[\left\Vert \Pi_{J_0}f\right\Vert ^2-\left\Vert \Pi_{J}f\right\Vert ^2=\left\|f-\Pi_{J}f\right\|^2\geq(1-\rho_{q^*}^2)\kappa_l,\] 
where $l=|J_0\setminus J|$ and
\[\kappa_l:=\min_{J'\subseteq J_0,|J'|=l}\Big\Vert \sum_{j\in J'}f_j\Big\Vert^2.\]
\end{proposition}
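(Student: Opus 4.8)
The plan is to reduce the inequality to a lower bound for the distance between a single sum of components of $f$ and the subspace $H_J$, and then to invoke the equivalent form of the main assumption recorded in Remark~\ref{angeqsin}.

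First I would establish the stated identity. Since $f=\sum_{j\in J_0}f_j$ with $f_j\in H_j\subseteq H_{J_0}$, we have $f\in H_{J_0}$ and hence $\Pi_{J_0}f=f$, so $\|\Pi_{J_0}f\|^2=\|f\|^2$; moreover $H_J$ is closed because $|J|\le q^*\le 2q^*$, so $\Pi_Jf$ is well defined and $f-\Pi_Jf$ is orthogonal to $\Pi_Jf\in H_J$. The Pythagorean identity then yields $\|f\|^2=\|f-\Pi_Jf\|^2+\|\Pi_Jf\|^2$, i.e. $\left\|\Pi_{J_0}f\right\|^2-\left\|\Pi_Jf\right\|^2=\left\|f-\Pi_Jf\right\|^2$.

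Next I would split $J_0$ into $A:=J_0\cap J$ and $B:=J_0\setminus J$; by hypothesis $B\neq\emptyset$, so $l=|B|\ge 1$. Writing $g_A:=\sum_{j\in A}f_j\in H_A$ and $g_B:=\sum_{j\in B}f_j\in H_B$ we have $f=g_A+g_B$, and since $A\subseteq J$ gives $H_A\subseteq H_J$ and $\Pi_Jg_A=g_A$, linearity of the projection yields $\Pi_Jf=g_A+\Pi_Jg_B$ and therefore $f-\Pi_Jf=g_B-\Pi_Jg_B$. This reduces the task to bounding $\|g_B-\Pi_Jg_B\|^2$ from below.

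Finally, $B$ and $J$ are disjoint with $|B|=l\le|J_0|\le q^*$ and $|J|\le q^*$, while $g_B\in H_B$ and $-\Pi_Jg_B\in H_J$; applying Remark~\ref{angeqsin} with $(J_1,J_2)=(B,J)$ and $(h_{J_1},h_{J_2})=(g_B,-\Pi_Jg_B)$ gives $\|f-\Pi_Jf\|^2=\|g_B-\Pi_Jg_B\|^2\ge(1-\rho_{q^*}^2)\|g_B\|^2$, and since $B\subseteq J_0$ with $|B|=l$ the definition of $\kappa_l$ gives $\|g_B\|^2=\big\|\sum_{j\in B}f_j\big\|^2\ge\kappa_l>0$, the positivity coming from Lemma~\ref{minlemma}. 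There is no genuine analytic difficulty here: the content is simply choosing the right test vectors for Remark~\ref{angeqsin}, and the only points requiring attention are the bookkeeping of the cardinality constraints $|B|\le q^*$ and $|J|\le q^*$ needed to apply Remark~\ref{angeqsin}, and the observation $B\neq\emptyset$ ensuring that $\kappa_l$ is a minimum over a nonempty family.
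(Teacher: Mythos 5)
Your proof is correct and is essentially identical to the paper's: the same decomposition $f=f_{J_0\cap J}+f_{J_0\setminus J}$, the same identities $\Pi_Jf=f_{J_0\cap J}+\Pi_Jf_{J_0\setminus J}$ and $f-\Pi_Jf=f_{J_0\setminus J}-\Pi_Jf_{J_0\setminus J}$, and the same application of Remark~\ref{angeqsin} to the disjoint pair $(J_0\setminus J,\,J)$ followed by the definition of $\kappa_l$. The extra bookkeeping you include (closedness of $H_J$, the cardinality checks, nonemptiness of $J_0\setminus J$) is accurate and harmless.
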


\begin{proof} 
The equality follows from $\Pi_{J_0}f=f$ and the projection theorem. We turn to the proof of the inequality.
We have $f=\sum_{j\in J_0\cap J}f_j+\sum_{j\in J_0\setminus J}f_j=:f_{J_0\cap J}+f_{J_0\setminus J}$. Hence
\[\Pi_{J}f=f_{J_0\cap J}+\Pi_Jf_{J_0\setminus J}\]
and 
\[f-\Pi_{J}f=f_{J_0\setminus J}-\Pi_Jf_{J_0\setminus J}.\] 
We have $f_{J_0\setminus J}\in H_{J_0\setminus J}$, $\Pi_Jf_{J_0\setminus J}\in H_J$, and $l=|J_0\setminus J|\geq 1$. Thus \eqref{eq:mmm} and the definition of $\kappa_l$ yield 
\[\left\|f-\Pi_{J}f\right\|^2=\left\Vert f_{J_0\setminus J}-\Pi_Jf_{J_0\setminus J}\right\Vert ^2\geq (1-\rho_{q^*}^2)\Vert f_{J_0\setminus J}\Vert^2\geq (1-\rho_{q^*}^2)\kappa_l.\] 
This completes the proof.
\end{proof}
Finally, we show that $\rho_{q^*}$ can be related to a quantity which is known in the literature on sparse additive models (see, e.g., \cite{KY}).
\begin{lemma}\label{ric}
Let $\epsilon_{2q^*}$ be the smallest number such that 
\begin{equation}\label{eq:riceq} 
\Big\|\sum_{j\in J}f _j\Big\|^2\geq(1-\epsilon_{2q^*})\Big(\sum_{j\in J}\left\|f _j\right\|^2\Big)
\end{equation}
for all $J\subseteq\left\{1,\dots,q\right\}$ with $|J|\leq {2q^*}$ and all $\sum_{j\in J}f_j\in H_J$. Then we have $\rho_{q^*}<1$ if and only if $\epsilon_{2q^*}<1$.
\end{lemma}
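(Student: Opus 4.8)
The plan is to prove the two implications separately; in each case I pass through the coercive reformulations of the two conditions, namely \eqref{eq:mmm} for the angle condition and \eqref{eq:riceq} for the other.

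\emph{From $\epsilon_{2q^*}<1$ to $\rho_{q^*}<1$.} Fix disjoint $J_1,J_2$ with $|J_1|,|J_2|\le q^*$ and $h_{J_i}\in H_{J_i}$, and write $h_{J_i}=\sum_{j\in J_i}h_j$ with $h_j\in H_j$ (possible since $H_{J_i}=\sum_{j\in J_i}H_j$). Since $h_{J_1}-h_{J_2}=\sum_{j\in J_1}h_j+\sum_{j\in J_2}(-h_j)$ belongs to $H_{J_1\cup J_2}$ with $|J_1\cup J_2|\le 2q^*$, inequality \eqref{eq:riceq} gives $\|h_{J_1}-h_{J_2}\|^2\ge(1-\epsilon_{2q^*})\bigl(\sum_{j\in J_1}\|h_j\|^2+\sum_{j\in J_2}\|h_j\|^2\bigr)$. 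Combining this with the elementary bound $\sum_{j\in J_i}\|h_j\|^2\ge |J_i|^{-1}\|h_{J_i}\|^2\ge (q^*)^{-1}\|h_{J_i}\|^2$ (triangle inequality and Cauchy--Schwarz) and expanding $\|h_{J_1}-h_{J_2}\|^2$, I would obtain $2\langle h_{J_1},h_{J_2}\rangle\le \bigl(1-(1-\epsilon_{2q^*})/q^*\bigr)(\|h_{J_1}\|^2+\|h_{J_2}\|^2)$. Since $H_{J_1}$ and $H_{J_2}$ are subspaces I may rescale $h_{J_1},h_{J_2}$ to unit norm, which gives $\rho_0(H_{J_1},H_{J_2})\le 1-(1-\epsilon_{2q^*})/q^*$; taking the maximum over all admissible $J_1,J_2$ yields $\rho_{q^*}<1$.

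\emph{From $\rho_{q^*}<1$ to $\epsilon_{2q^*}<1$.} Here I would argue by induction on $k=1,\dots,2q^*$ that there is a constant $c_k>0$, decreasing in $k$, such that $\bigl\|\sum_{j\in J}h_j\bigr\|^2\ge c_k\sum_{j\in J}\|h_j\|^2$ for every $J\subseteq\{1,\dots,q\}$ with $|J|\le k$ and every choice of $h_j\in H_j$; the case $k=2q^*$ then gives $\epsilon_{2q^*}\le 1-c_{2q^*}<1$. The case $k=1$ is trivial with $c_1=1$. For the step, take $J$ with $|J|=k\ge 2$ and split it into disjoint sets $J_1,J_2$ of cardinality at most $\lceil k/2\rceil$, which is $\le q^*$ (and $<k$). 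Applying \eqref{eq:mmm} and, by symmetry, its version with $\|h_{J_2}\|$ on the right, then averaging, gives $\bigl\|\sum_{j\in J}h_j\bigr\|^2=\|h_{J_1}+h_{J_2}\|^2\ge \tfrac12(1-\rho_{q^*}^2)(\|h_{J_1}\|^2+\|h_{J_2}\|^2)$; the induction hypothesis, applicable since $|J_1|,|J_2|<k$, bounds each $\|h_{J_i}\|^2$ below by $c_{k-1}\sum_{j\in J_i}\|h_j\|^2$. This closes the induction with $c_k=\tfrac12(1-\rho_{q^*}^2)\,c_{k-1}$, so $c_{2q^*}\ge\bigl(\tfrac12(1-\rho_{q^*}^2)\bigr)^{2q^*-1}>0$.

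The substantial point is the second implication. A minimal-angle bound between \emph{pairs} of blocks does not by itself rule out near-linear-dependence among $2q^*$ blocks; what rescues the argument is that the hypothesis supplies such a bound for every pair of blocks of size up to $q^*$, and the balanced splitting in the induction is precisely what keeps each application of \eqref{eq:mmm} restricted to blocks of size $\le q^*$ while still reaching every $J$ with $|J|\le 2q^*$. The constants obtained this way are far from sharp, but only their positivity is needed. The first implication is routine and, as it must be, does not use closedness of the spaces $H_J$ --- consistent with the fact that it is the implication that establishes Assumption~\ref{anglesep} in the first place.
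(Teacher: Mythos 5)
Your proof is correct and follows essentially the same route as the paper: the ``if'' direction combines \eqref{eq:riceq} with an upper bound on $\|h_{J_i}\|^2$ in terms of $\sum_{j\in J_i}\|h_j\|^2$ (your Cauchy--Schwarz bound is exactly the paper's worst case $1+\epsilon_{q^*}'=q^*$ in \eqref{eq:ric2}), and the ``only if'' direction iterates the pairwise angle bound via balanced splitting, which is precisely the argument the paper compresses into ``applying \eqref{eq:anglesepeq} iteratively'' to get $1-\epsilon_{2q^*}\geq(1-\rho_{q^*}^2)^{\log_2 q^*+1}$. Your explicit induction yields a cruder constant $\bigl(\tfrac12(1-\rho_{q^*}^2)\bigr)^{2q^*-1}$ because you recurse on $c_{k-1}$ rather than $c_{\lceil k/2\rceil}$, but as you note only positivity matters here.
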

A proof of this lemma is given in Appendix \ref{eas}. 

\subsection{The selection criterion}\label{selcrit}
In this section, we construct the selection criterion.
For $j=1,\dots,q$, let $V_j\subseteq H_j$ be finite-dimensional linear subspaces. For $J\subseteq\lbrace1,\dots,q\rbrace$, let \[V_J=\sum_{j\in J}V_j\]
and $d_J=\dim V_J$. Moreover, for $l=1,\dots,q$, let $d_{l}=\max_{|J|=l}d_J$.

In order to proceed, we introduce some further notation. Let $\|\cdot\|_n$ be the empirical norm which is defined by 
\[\|h\|_n^2=\frac{1}{n}\sum_{i=1}^nh^2(X^i)\]
for $h\in L^2(\mathbb{P}^X)$, and which is defined by $\|\cdot\|_n^2=(1/n)\|\cdot\|_2^2$ if applied to vectors in $\mathbb{R}^n$. Here, $\|\cdot\|_2$ denotes the usual Euclidean norm. Moreover, let $\hat{\Pi}_{J}$ be the orthogonal projection from $\mathbb{R}^n$ to the subspace $\lbrace (g_J(X^1),\dots,g_J(X^n))^T|g_J\in V_J\rbrace$. If $h\in L^2(\mathbb{P}^X)$, then we abbreviate $\hat{\Pi}_J(h(X^1),\dots,h(X^n))^T$ as $\hat{\Pi}_Jh$.
Finally, let $\mathbf{Y}=(Y^1,\dots,Y^n)^T$ and $\boldsymbol{\epsilon}=(\epsilon^1,\dots,\epsilon^n)^T$. 
Motivated by Proposition \ref{fundlem}, we define an estimator $\hat{J}_0$ of $J_0$ as follows: 
\begin{equation}\label{eq:sc}
\hat{J_0}=\operatorname{arg\ max}_{J\subseteq\lbrace1,\dots,q\rbrace, |J|\leq q^*}\limits\left(\big\Vert\hat{\Pi}_J\mathbf{Y}\big\Vert^2_n-\sigma^2d_J/n\right).
\end{equation}
Conditioning on $X^1,\dots,X^n$, the random variable $(n/\sigma^2)\Vert\hat{\Pi}_J\boldsymbol{\epsilon}\Vert_n^2$ has a chi-square distribution with $\operatorname{rank}(\hat{\Pi}_J)\leq d_J$ degrees of freedom and the last term is supposed to cancel its expectation. The last term can also be seen as a penalty term. In fact, the criterion in \eqref{eq:sc} can be written as a penalized least squares criterion (see, e.g., \cite{M}). 

The success of the criterion depends on a suitable choice of the $V_j$, which in turn depends on the regularity conditions of the $f_j$. For instance, if the $f_j$ belong to some known finite-dimensional linear subspaces of $H_j$, then we let the $V_j$ be equal to these spaces. In the following, we consider the nonparametric case.
Without loss of generality, we shall restrict our attention to (periodic) Sobolev smoothness and spaces of trigonometric polynomials. 
A similar treatment is possible, e.g., for H\"{o}lder smoothness and spaces of piecewise polynomials or spaces of splines.
Recall that the trigonometric basis is given by $\phi_1(x)= 1$ and $\phi_{2k}(x)=\sqrt{2}\cos(2\pi kx)$ and $\phi_{2k+1}(x)=\sqrt{2}\sin(2\pi kx)$, $k\geq 1$, where $x\in[0,1]$.
\begin{assumption}\label{hass}
Suppose that the $X_j$ take values in $[0,1]$ and have densities $p_j$ with respect to the Lebesgue measure on $[0,1]$, which satisfy $c\leq p_j\leq 1/c$ for some constant $c>0$. Moreover, suppose that the $f_j$ belong to the Sobolev classes 
\[\tilde{W}_j(\alpha_j,K_j)=\left\{\sum_{k=1}^\infty \theta_k\phi_k(x_j)\ :\ \sum_{k=1}^\infty(2\pi k)^{2\alpha_j}(\theta_{2k}^2+\theta_{2k+1}^2)\leq K_j^2\right\},\]
where $\alpha_j>1/2$ and $K_j> 0$ (see, e.g., \cite[Definition 1.12]{T}).
\end{assumption}
For $j=1,\dots,q$, let $V_j$ be the intersection of $H_j$ with the linear span of $\phi_1,\dots,\phi_{m_j}$ (in the variable $x_j$). The choice of the $m_j$ will depend on the following approximation properties. 
\begin{lemma}\label{lemappr} Let Assumption \ref{hass} be satisfied. Then there exists a constant $C_j>0$ depending only on $\alpha_j$ and $c$ (given explicitly in the proof) such that
\begin{align*}
&\|h_j-\Pi_{V_j}h_j\|^2\leq C_jK_j^2m_j^{-2\alpha_j}\ \ \ \text{ and}\\
& \|h_j-\Pi_{V_j}h_j\|_\infty^2\leq C_jK_j^2m_j^{1-2\alpha_j}
\end{align*}
for all $h_j\in\tilde{W}_j(\alpha_j,K_j)\cap H_j$, where $\Pi_{V_j}$ is the orthogonal projection from $L^2(\mathbb{P}^X)$ to $V_j$.
\end{lemma}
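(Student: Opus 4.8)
The plan is to reduce the statement about the $L^2(\mathbb{P}^X)$-norm and the sup-norm of the approximation error to classical approximation-theoretic estimates for the truncated trigonometric series, exploiting that the density $p_j$ is bounded above and below. First I would fix $j$ and write $h_j=\sum_{k\geq 1}\theta_k\phi_k$ with $\theta_1=0$ (since $h_j\in H_j$) and $\sum_k(2\pi k)^{2\alpha_j}(\theta_{2k}^2+\theta_{2k+1}^2)\le K_j^2$. Denote by $P_{m_j}h_j=\sum_{k\le m_j}\theta_k\phi_k$ the partial sum (the orthogonal projection in $L^2([0,1],\mathrm{Leb})$ onto the span of $\phi_1,\dots,\phi_{m_j}$, which also lies in $H_j$). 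The key observation is that $\Pi_{V_j}h_j$ is the orthogonal projection of $h_j$ onto $V_j$ with respect to $\langle\cdot,\cdot\rangle=\mathbb{E}[\cdot(X)\cdot(X)]$, hence it is the best approximation to $h_j$ from $V_j$ in the $L^2(\mathbb{P}^{X_j})$-norm; in particular $\|h_j-\Pi_{V_j}h_j\|\le\|h_j-P_{m_j}h_j\|$. Since $p_j\le 1/c$, we have $\|g\|^2=\int g^2 p_j\le (1/c)\|g\|_{L^2(\mathrm{Leb})}^2$ for any $g$, so
\[
\|h_j-\Pi_{V_j}h_j\|^2\le\|h_j-P_{m_j}h_j\|^2\le\frac1c\|h_j-P_{m_j}h_j\|_{L^2(\mathrm{Leb})}^2=\frac1c\sum_{k>m_j}\theta_k^2.
\]
Then the standard trick: group the tail in pairs $(2k,2k+1)$ and insert the weights, $\sum_{k>m_j}\theta_k^2=\sum_{2k>m_j}(\theta_{2k}^2+\theta_{2k+1}^2)\le \max_{2k>m_j}(2\pi k)^{-2\alpha_j}\sum_k(2\pi k)^{2\alpha_j}(\theta_{2k}^2+\theta_{2k+1}^2)\le (\pi m_j)^{-2\alpha_j}K_j^2$, which gives the first inequality with $C_j=(c\pi^{2\alpha_j})^{-1}$, up to harmless constant bookkeeping (one must be slightly careful with the index range $2k>m_j$ versus $k>m_j/2$, but this only affects the constant).

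For the sup-norm bound I would argue directly on the pointwise series tail: for $x\in[0,1]$,
\[
|h_j(x)-P_{m_j}h_j(x)|\le\sum_{k>m_j}|\theta_k|\,|\phi_k(x)|\le\sqrt2\sum_{2k>m_j}\big(|\theta_{2k}|+|\theta_{2k+1}|\big),
\]
and then Cauchy--Schwarz against the weight $(2\pi k)^{2\alpha_j}$:
\[
\sum_{2k>m_j}\big(|\theta_{2k}|+|\theta_{2k+1}|\big)\le\Big(\sum_{2k>m_j}(2\pi k)^{-2\alpha_j}\Big)^{1/2}\Big(2\sum_k(2\pi k)^{2\alpha_j}(\theta_{2k}^2+\theta_{2k+1}^2)\Big)^{1/2}.
\]
Since $\alpha_j>1/2$, the series $\sum_{2k>m_j}(2\pi k)^{-2\alpha_j}$ converges and is bounded by a constant times $m_j^{1-2\alpha_j}$ (comparison with the integral $\int_{m_j/2}^\infty (2\pi t)^{-2\alpha_j}\,dt$), so $\|h_j-P_{m_j}h_j\|_\infty^2\le C' K_j^2 m_j^{1-2\alpha_j}$ for a constant depending only on $\alpha_j$. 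To transfer this to $\|h_j-\Pi_{V_j}h_j\|_\infty$, note that $\Pi_{V_j}h_j-P_{m_j}h_j\in V_j$ is a trigonometric polynomial of degree $\le m_j$, and on such polynomials the $L^2(\mathbb{P}^{X_j})$-norm and the $L^2(\mathrm{Leb})$-norm are equivalent (because $c\le p_j\le 1/c$); combined with a Bernstein/Nikolskii-type inequality bounding the sup-norm of a degree-$m_j$ trigonometric polynomial by $\sqrt{C m_j}$ times its $L^2$-norm, one gets $\|\Pi_{V_j}h_j-P_{m_j}h_j\|_\infty\lesssim\sqrt{m_j}\,\|\Pi_{V_j}h_j-P_{m_j}h_j\|_{L^2(\mathrm{Leb})}\lesssim\sqrt{m_j}\,\|h_j-P_{m_j}h_j\|$, and the last quantity is already controlled by the first part of the lemma as $\lesssim K_j m_j^{-\alpha_j}$, yielding $\lesssim K_j m_j^{1/2-\alpha_j}$; squaring and combining with the triangle inequality finishes the sup-norm bound with the stated exponent $1-2\alpha_j$.

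I expect the main obstacle to be the sup-norm transfer step rather than the $L^2$ estimate: handling $\|h_j-\Pi_{V_j}h_j\|_\infty$ instead of $\|h_j-P_{m_j}h_j\|_\infty$ requires the Nikolskii inequality for trigonometric polynomials together with the norm equivalence induced by the density bounds, and one has to track that the resulting constant depends only on $\alpha_j$ and $c$ (the $c$-dependence entering through the density bounds in the norm equivalence). An alternative, cleaner route that avoids Nikolskii altogether is to observe that $\Pi_{V_j}h_j$ and $P_{m_j}h_j$ differ by an element of $V_j$ whose $L^2(\mathbb P^{X_j})$ norm is at most $\|h_j-P_{m_j}h_j\|+\|h_j-\Pi_{V_j}h_j\|\le 2\|h_j-P_{m_j}h_j\|$; expanding that element in the (finite) basis $\phi_1,\dots,\phi_{m_j}$, bounding its coefficients via the norm equivalence, and summing $\sum_{k\le m_j}\|\phi_k\|_\infty=O(m_j)$ against them gives the $\sqrt{m_j}$ loss as well. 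Either way, the routine calculations are just summation of geometric-type tails and I would present them compactly, displaying only the two chains of inequalities above and the final collection of constants into the explicit $C_j$.
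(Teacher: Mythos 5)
Your proposal follows essentially the same route as the paper's proof in Appendix \ref{plemappr}: compare $\Pi_{V_j}h_j$ with the truncated Fourier series $P_{m_j}h_j$, use the density bounds $c\le p_j\le 1/c$ to pass between $L^2(\mathbb{P}^{X_j})$ and $L^2(\mathrm{Leb})$, control the $\ell^2$ and $\ell^1$ tails of the Sobolev ellipsoid, and transfer the sup-norm estimate to $\Pi_{V_j}h_j$ via the Nikolskii-type bound $\|g\|_\infty^2\lesssim m_j\|g\|_{L^2(\mathrm{Leb})}^2$ on the span of $\phi_1,\dots,\phi_{m_j}$. One slip needs repair: you assert that $\theta_1=0$ and that $P_{m_j}h_j$ lies in $H_j$, but $H_j$ is defined by $\mathbb{E}[h_j(X_j)]=0$ under $\mathbb{P}^{X_j}$, not under Lebesgue measure, so the Lebesgue--Fourier coefficient $\theta_1=\int_0^1 h_j(x)\,dx$ need not vanish and $P_{m_j}h_j$ need not belong to $V_j$; consequently $\|h_j-\Pi_{V_j}h_j\|\le\|h_j-P_{m_j}h_j\|$ does not follow directly from best approximation in $V_j$. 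The fix is one line and is exactly what the paper does: set $U_j=V_j+\mathbb{R}$, note that $V_j\perp\mathbb{R}$ so $\Pi_{V_j}h_j=\Pi_{U_j}h_j$ for $h_j\in H_j$, and observe that $P_{m_j}h_j\in U_j$, which restores the best-approximation inequality (equivalently, replace $P_{m_j}h_j$ by $P_{m_j}h_j-\mathbb{E}[P_{m_j}h_j(X_j)]\in V_j$, which only decreases the $L^2(\mathbb{P}^{X_j})$-distance to $h_j$). With that repair, both your main argument and your ``alternative route'' for the sup-norm are correct and yield the stated rates with a constant depending only on $\alpha_j$ and $c$.
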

For completeness, a proof of this lemma is given in Appendix \ref{plemappr}. We suppose that for $j=1,\dots,q$,
\begin{equation}\label{eq:lowbound}
 m_j\geq\left(\frac{C_jK_j^2q^*(1+\epsilon_{q^*}')}{c'(1-\rho_{q^*}^2)\kappa}\right)^{1/2\alpha_j},
\end{equation}
where $0<c'<1$ is a small constant satisfying \eqref{eq:ccond} and $\epsilon_{q^*}'$ is a positive real number such that
\begin{equation}\label{eq:ric2}
\Big\|\sum_{j\in J}f _j\Big\|^2\leq(1+\epsilon_{q^*}')\Big(\sum_{j\in J}\left\|f _j\right\|^2\Big)
\end{equation}
for all $J\subseteq\left\{1,\dots,q\right\}$ with $|J|\leq q^*$ and all $\sum_{j\in J}f_j\in H_J$. Note that, by the Cauchy-Schwarz inequality, we can always choose $1+\epsilon_{q^*}'=q^*$. The $m_j$ are chosen such that the following upper bound holds
\begin{equation}\label{eq:af2}
\Big\|f-\sum_{j\in J_0}\Pi_{V_j}f_j\Big\|^2\leq (1+\epsilon_{q^*}')\sum_{j\in J_0}\|f_j-\Pi_{V_j}f_j\|^2\leq c'(1-\rho_{q^*}^2)\kappa,
\end{equation}
where we used \eqref{eq:ric2} and Lemma \ref{lemappr}. 
Applying Bennett's inequality and Lemma \ref{lemappr}, one can show that a similar bound holds with high probability when the $L^2(\mathbb{P}^X)$-norm is replaced by the empirical norm $\|\cdot\|_n$. The result is as follows:
\begin{lemma}\label{testdev2} Let Assumptions \ref{anglesep} and \ref{hass} be satisfied. Suppose that \eqref{eq:lowbound} is satisfied for $j=1,\dots,q$. Let the event $\mathcal{A}$ be given by
\[\mathcal{A}=\left\{\Big\|f-\sum_{j\in J_0}\Pi_{V_j}f_j\Big\|^2_n\leq 2c'(1-\rho_{q^*}^2)\kappa\right\}.\]
Then
\begin{equation}\label{eq:mn}
\mathbb{P}\left(\mathcal{A}^c\right)\leq \exp\left(-\frac{3}{16}\frac{n}{d_{q^*}}\right).
\end{equation}
\end{lemma}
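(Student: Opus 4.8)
The plan is to reduce everything to a one-sided deviation bound for an i.i.d.\ sum and then apply Bennett's inequality. Write $g:=f-\sum_{j\in J_0}\Pi_{V_j}f_j$. Since each $V_j$ consists of functions of the single variable $x_j$, so does $\Pi_{V_j}f_j$, and hence $g=\sum_{j\in J_0}g_j$ with $g_j:=f_j-\Pi_{V_j}f_j$ a function of $x_j$ alone; in particular $g(X^1)^2,\dots,g(X^n)^2$ are i.i.d.\ with common mean $\|g\|^2$. Estimate \eqref{eq:af2} already gives the population bound $\|g\|^2\le c'(1-\rho_{q^*}^2)\kappa$, so on the event that $\|g\|_n^2-\|g\|^2\le c'(1-\rho_{q^*}^2)\kappa$ we automatically have $\|g\|_n^2\le 2c'(1-\rho_{q^*}^2)\kappa$, i.e.\ $\mathcal{A}$ holds. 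Thus it suffices to bound the probability of the upward fluctuation $\|g\|_n^2-\|g\|^2> c'(1-\rho_{q^*}^2)\kappa$, and for Bennett's inequality the one extra ingredient needed is a bound on the sup-norm $\|g\|_\infty^2$.

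To control $\|g\|_\infty$, I would combine the triangle inequality and the Cauchy--Schwarz inequality to get $\|g\|_\infty^2\le |J_0|\sum_{j\in J_0}\|g_j\|_\infty^2$, then use the second bound of Lemma \ref{lemappr} to write $\|g_j\|_\infty^2\le C_jK_j^2m_j^{1-2\alpha_j}=m_j\cdot C_jK_j^2m_j^{-2\alpha_j}$. The lower bound \eqref{eq:lowbound} on $m_j$ forces $C_jK_j^2m_j^{-2\alpha_j}\le c'(1-\rho_{q^*}^2)\kappa/(q^*(1+\epsilon_{q^*}'))$, so using $|J_0|\le q^*$ and $1+\epsilon_{q^*}'\ge1$ one obtains $\|g\|_\infty^2\le c'(1-\rho_{q^*}^2)\kappa\sum_{j\in J_0}m_j$. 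The final point is to tie $\sum_{j\in J_0}m_j$ to $d_{q^*}$: Assumption \ref{anglesep} together with Lemma \ref{ric} gives $\epsilon_{2q^*}<1$, hence $\|\sum_{j\in J}v_j\|^2\ge(1-\epsilon_{2q^*})\sum_{j\in J}\|v_j\|^2$ for all $v_j\in V_j\subseteq H_j$ with $|J|\le 2q^*$, which forces the sum $V_{J_0}=\sum_{j\in J_0}V_j$ to be direct and therefore $\dim V_{J_0}=\sum_{j\in J_0}\dim V_j$. Since $\dim V_j=m_j-1$ for $j<q$ (the functional $h\mapsto\mathbb{E}[h(X_j)]$ does not vanish on $\mathrm{span}(\phi_1,\dots,\phi_{m_j})$, being $1$ at $\phi_1$) and $\dim V_q=m_q$, this yields $\sum_{j\in J_0}m_j\le d_{J_0}+|J_0|\le 2d_{q^*}$, where $d_{J_0}\le d_{q^*}$ because $J_0$ can be enlarged to a set of size $q^*$ without decreasing $d$. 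Hence $b:=\|g\|_\infty^2\le 2c'(1-\rho_{q^*}^2)\kappa\, d_{q^*}$.

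It then remains to apply Bennett's inequality to $Z_i:=g(X^i)^2\in[0,b]$, which have mean $\|g\|^2$ and variance $v\le b\|g\|^2$: with $\tau:=c'(1-\rho_{q^*}^2)\kappa\ge\|g\|^2$, Bennett bounds $\mathbb{P}(\frac1n\sum_iZ_i-\|g\|^2\ge\tau)$ by $\exp(-\tfrac{nv}{b^2}h(\tfrac{b\tau}{v}))$ with $h(u)=(1+u)\log(1+u)-u$. Rewriting $\tfrac{v}{b^2}h(\tfrac{b\tau}{v})=\tfrac{\tau}{b}\cdot\tfrac{h(b\tau/v)}{b\tau/v}$ and using that $u\mapsto h(u)/u$ is nondecreasing while $b\tau/v\ge1$ (as $v\le b\tau$), the probability is at most $\exp(-(2\log2-1)n\tau/b)$; inserting $\tau/b\ge 1/(2d_{q^*})$ and the numerical fact $2\log2-1>3/8$ gives exactly \eqref{eq:mn}. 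I expect the only genuinely delicate ingredient to be the geometric identity $\dim V_{J_0}=\sum_{j\in J_0}\dim V_j$ — this is precisely where Assumption \ref{anglesep} is used, through Lemma \ref{ric} — which is what lets the sup-norm bound be expressed in terms of $d_{q^*}$; everything else is Bennett plus tracking constants so as to land on $3/16$.
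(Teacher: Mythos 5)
Your proof is correct and follows essentially the same route as the paper: reduce to the one-sided deviation of the i.i.d.\ sum $\tfrac1n\sum_i g(X^i)^2$ about its mean $\|g\|^2\le c'(1-\rho_{q^*}^2)\kappa$ (via \eqref{eq:af2}), bound $\|g\|_\infty^2$ by $2d_{q^*}c'(1-\rho_{q^*}^2)\kappa$ using Lemma \ref{lemappr} and \eqref{eq:lowbound}, and apply Bennett's inequality. The only differences are cosmetic: the paper quotes the Bernstein-type corollary of Bennett (denominator $2v+\tfrac23 b x$), which yields the factor $3/8$ directly, where you use the Bennett function $h$ together with $h(1)=2\log 2-1>3/8$; and you make explicit the step $\sum_{j\in J_0}m_j\le 2d_{q^*}$ (via directness of $V_{J_0}=\bigoplus_{j\in J_0}V_j$, from Lemma \ref{ric}) that the paper leaves implicit.
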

A proof of Lemma \ref{testdev2} is given in Appendix \ref{adev2}
\subsection{The main result and some consequences}\label{tmr} In this section, we present our first main theorem and derive several consequences. These results will be further developed in Section \ref{grip}, where the final results can be found.

For $J\subseteq\lbrace 1,\dots,q\rbrace$ and $0<\delta<1$ (e.g. $\delta=1/2$), we define the events
\begin{equation*}\mathcal{E}_{\delta,J}=\left\lbrace (1-\delta)\Vert g_J\Vert^2\leq\Vert g_J\Vert_n^2\leq (1+\delta)\Vert g_J\Vert^2\ \text{ for all }g_J\in V_J\right\rbrace.
\end{equation*}
Moreover, we define
\begin{equation*}
\mathcal{E}_{\delta,q^*}=\bigcap_{J\subseteq\{1,\dots,q\},|J|\leq q^*}\mathcal{E}_{\delta,J\cup J_0}.
\end{equation*}
We prove:
\begin{thm}\label{mta} Let Assumptions \ref{anglesep} and \ref{hass} be satisfied. Let $0<\delta<1$. Suppose that \eqref{eq:lowbound} is satisfied for $j=1,\dots,q$. Then there is a constant $c_1>0$ depending only on $\delta$ (given explicitly in the proof) such that
\begin{align}
&\mathbb{P}\left( J_0\subseteq \hat{J}_0 \right)\nonumber\geq 1-\mathbb{P}\left(\mathcal{E}_{\delta,q^*}^c\right)-\exp\left(-\frac{3}{16}\frac{n}{d_{q^*}}\right)\nonumber\\
&-\sum_{l=1}^{s}\sum_{m=0}^{q^*-(s-l)}\binom{s}{l}\binom{q-s}{m}
4\exp\left( -c_1\frac{n^2(1-\rho_{q^*}^2)^2\kappa_l^2}{\sigma^4d_{q^*-s+l} +\sigma^2n(1-\rho_{q^*}^2)\kappa_l}\right).\label{eq:mres}
\end{align}
Recall, that the $d_l$ are given by $d_{l}=\max_{|J|=l}d_J$.
\end{thm}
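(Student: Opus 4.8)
The plan is to bound the probability of the bad event $\{J_0\not\subseteq\hat J_0\}$. If $J_0\not\subseteq\hat J_0$, then the selection criterion \eqref{eq:sc} picked some $J$ with $|J|\leq q^*$ and $J_0\setminus J\neq\emptyset$ that scored at least as high as $J_0\cup(\text{something})$; more precisely, writing $l=|J_0\setminus\hat J_0|\geq 1$, the event $\{J_0\not\subseteq\hat J_0\}$ is contained in the union, over all $J$ with $|J|\leq q^*$ and $J_0\setminus J\neq\emptyset$, of the events that $J$ beats $J_0\cap J^c$ augmented suitably — the natural comparison set is $\tilde J=J_0\cup J$ if $|J_0\cup J|\leq q^*$, but in general one compares $\hat J_0$ against a fixed set containing $J_0$ of cardinality $\le q^*$. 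A cleaner route, and the one I would take, is: on the event $\mathcal{A}\cap\mathcal{E}_{\delta,q^*}$, I will show that for every $J$ with $|J|\le q^*$ and $l:=|J_0\setminus J|\ge 1$ one has, with high probability, $\|\hat\Pi_J\mathbf{Y}\|_n^2-\sigma^2 d_J/n < \|\hat\Pi_{\tilde J}\mathbf{Y}\|_n^2-\sigma^2 d_{\tilde J}/n$ where $\tilde J$ is a fixed superset of $J_0$ of size $\le q^*$; since $\hat J_0$ maximizes the criterion, this forces $J_0\subseteq\hat J_0$. Then a union bound over the (at most $\sum_{l,m}\binom{s}{l}\binom{q-s}{m}$) relevant pairs $(J\cap J_0^c$-part, $J_0\setminus J)$ gives \eqref{eq:mres}.

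The core is the pairwise comparison. Decompose $\mathbf{Y}=f+\boldsymbol\epsilon$ on $\mathbb{R}^n$ and, using that $\hat\Pi_J$ is an orthogonal projection, expand
\[\|\hat\Pi_J\mathbf{Y}\|_n^2=\|\hat\Pi_J f\|_n^2+2\langle \hat\Pi_J f,\boldsymbol\epsilon\rangle_n+\|\hat\Pi_J\boldsymbol\epsilon\|_n^2,\]
and similarly for $\tilde J$. The difference of criteria splits into a deterministic-in-$f$ gap, a cross term linear in $\boldsymbol\epsilon$, and a chi-square-type term $\|\hat\Pi_{\tilde J}\boldsymbol\epsilon\|_n^2-\|\hat\Pi_J\boldsymbol\epsilon\|_n^2$ whose expectation (conditionally on $X^1,\dots,X^n$) is nearly cancelled by the penalty $\sigma^2(d_{\tilde J}-d_J)/n$. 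For the signal gap I would use Proposition \ref{fundlem} to get $\|f-\Pi_J f\|^2\ge(1-\rho_{q^*}^2)\kappa_l$ in the population, then transfer it to the empirical norm: on $\mathcal{E}_{\delta,J\cup J_0}$ the finite-sample projection $\hat\Pi_J$ behaves like $\Pi_{V_J}$, and combining with Lemma \ref{testdev2} (the event $\mathcal A$, controlling the approximation error $\|f-\sum_{J_0}\Pi_{V_j}f_j\|_n^2\le 2c'(1-\rho_{q^*}^2)\kappa$) and the lower bound \eqref{eq:lowbound} on the $m_j$ — which via \eqref{eq:af2} makes the bias negligible relative to $(1-\rho_{q^*}^2)\kappa_l$ — I obtain a deterministic empirical gap of order $(1-\rho_{q^*}^2)\kappa_l$, up to a constant depending on $\delta$ and $c'$. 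This is the step I expect to be the main obstacle: carefully quantifying, on $\mathcal{E}_{\delta,q^*}$, how the empirical projection norm $\|\hat\Pi_J f\|_n^2$ relates to $\|\Pi_J f\|^2$ uniformly in $J$, since $\hat\Pi_J$ projects onto a random subspace and $f\notin V_J$ in general; this is really the content of the "finite sample geometry" deferred to Section \ref{opt}.

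For the stochastic terms I would condition on $X^1,\dots,X^n$. The chi-square term $(n/\sigma^2)\|\hat\Pi_{\tilde J}\boldsymbol\epsilon\|_n^2-(n/\sigma^2)\|\hat\Pi_J\boldsymbol\epsilon\|_n^2$ is, conditionally, a difference of (generalized) chi-squares with at most $d_{\tilde J}\le d_{q^*-s+l}$ degrees of freedom; a Laurent–Massart-type tail bound gives deviations of order $\sqrt{d_{q^*-s+l}\, t}+t$ with probability $e^{-t}$. The cross term $\langle(\hat\Pi_{\tilde J}-\hat\Pi_J)f,\boldsymbol\epsilon\rangle_n$ is conditionally Gaussian with variance $(\sigma^2/n)\|(\hat\Pi_{\tilde J}-\hat\Pi_J)f\|_n^2\lesssim(\sigma^2/n)\|f-\Pi_Jf\|^2$, so it concentrates at scale $\sigma\sqrt{\|f-\Pi_J f\|^2\, t/n}$. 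Requiring each stochastic fluctuation to be at most half the signal gap $c(1-\rho_{q^*}^2)\kappa_l$ and optimizing over $t$ yields the exponent
\[-c_1\,\frac{n^2(1-\rho_{q^*}^2)^2\kappa_l^2}{\sigma^4 d_{q^*-s+l}+\sigma^2 n(1-\rho_{q^*}^2)\kappa_l},\]
the two terms in the denominator coming respectively from the chi-square ("small deviation") regime and from the linear/Gaussian regime. A union bound over the $\binom{s}{l}$ choices of $J_0\setminus J$ and $\binom{q-s}{m}$ choices of $J\setminus J_0$ (with $m$ ranging so that $|J|\le q^*$, i.e. $m\le q^*-(s-l)$), together with the factor-$4$ absorbing the handful of one-sided bounds and the contributions of $\mathcal E_{\delta,q^*}^c$ and $\mathcal A^c$ handled separately via Lemma \ref{testdev2}, produces exactly \eqref{eq:mres}.
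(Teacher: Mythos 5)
Your proposal follows essentially the same route as the paper: reduce to a union bound over the sets $J$ with $|J|\le q^*$ and $J_0\setminus J\neq\emptyset$, compare the criterion at $J$ with the criterion at a fixed feasible set containing $J_0$ (the paper simply takes $J_0$ itself), split $\mathbf{Y}$ into signal and noise, establish the empirical signal gap of order $(1-\rho_{q^*}^2)\kappa_l$ on $\mathcal{E}_{\delta,q^*}\cap\mathcal{A}$ via the bias decomposition $f=v+(f-v)$, and control the chi-square and Gaussian cross terms conditionally on the design. One step is imprecise as written: you justify the chi-square deviation by saying the difference involves ``at most $d_{\tilde J}\le d_{q^*-s+l}$ degrees of freedom,'' but $d_{\tilde J}\ge d_{J_0}$ can exceed $d_{q^*-s+l}$ (e.g.\ $q^*=s$, $l=1$ gives $d_{q^*-s+l}=d_1$ while $d_{J_0}\approx s\,d_1$); to obtain the stated denominator you must first cancel the common subspace $\hat V_{J\cap J_0}$, so that only $\chi^2(d_{J\setminus J_0})-\chi^2(d_{J_0\setminus J})$ with $d_{J\setminus J_0},d_{J_0\setminus J}\le d_{q^*-s+l}$ remains, which is exactly what the paper's Lemma \ref{testdev} does.
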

\begin{remark}\label{mtalc}
Theorem \ref{mta} also holds in the parametric case, i.e., if $f_j\in V_j$ for $j\in J$. In this case, only Assumption \ref{anglesep} has to be satisfied, Assumption \ref{hass} and the condition \eqref{eq:lowbound} disappear. Moreover, in \eqref{eq:mres} the term $\exp(-3n/(16 d_{q^*}))$ can be dropped. 
\end{remark}
The bound \eqref{eq:mres} yields the following simpler one
\begin{align}
\mathbb{P}\left( J_0\subseteq \hat{J}_0 \right)&\geq 1-\mathbb{P}\left(\mathcal{E}_{\delta,q^*}^c\right)-\exp\left(-\frac{3}{16}\frac{n}{d_{q^*}}\right)\nonumber\\
&-\left(\frac{eq}{q^*}\right)^{q^*}4\exp\left( -c_1\frac{n^2(1-\rho_{q^*}^2)^2\kappa^2}{\sigma^4d_{q^*} +\sigma^2n(1-\rho_{q^*}^2)\kappa}\right).\label{eq:qj2}
\end{align}
This can be seen as follows. First, we successively apply the bounds $\kappa_l\geq \kappa$ and $d_{q^*-s+l}\leq d_{q^*}$. Then, we use the following combinatorial result (for a proof see, e.g., \cite[Proposition 2.5]{M})
\begin{equation}\label{eq:comblem}
\sum_{j=0}^{q^*}\binom{q}{j}\leq\left(\frac{eq}{q^*}\right)^{q^*}.
\end{equation}
From \eqref{eq:qj2}, we conclude:
\begin{corollary}\label{mtacf} Suppose that the assumptions of Theorem \ref{mta} hold. Then for each constant $c_2>0$, there is a constant $c_3>0$ (depending only on $c_1$ and $c_2$) such that
\[\mathbb{P}\left( J_0\subseteq \hat{J}_0 \right)\nonumber\geq 1-\mathbb{P}\left(\mathcal{E}_{\delta,q^*}^c\right)-q^{-c_2},\]
provided that
\[\max\left\{\frac{\sigma^2\sqrt{q^*d_{q^*}\log(eq/q^*)}}{(1-\rho_{q^*}^2)\kappa},\frac{\sigma^2q^*\log(eq/q^*)}{(1-\rho_{q^*}^2)\kappa},d_{q^*}\log q\right\}\leq c_3n.\]
\end{corollary}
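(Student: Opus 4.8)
The plan is to derive Corollary \ref{mtacf} directly from the simplified bound \eqref{eq:qj2} by showing that, under the stated condition on $n$, each of the two explicit error terms there is at most $(1/2)q^{-c_2}$. The two terms to control are $\exp(-3n/(16d_{q^*}))$ and $4(eq/q^*)^{q^*}\exp(-c_1 n^2(1-\rho_{q^*}^2)^2\kappa^2/(\sigma^4 d_{q^*}+\sigma^2 n(1-\rho_{q^*}^2)\kappa))$. The mechanism is elementary: a bound of the form $A\exp(-B)\leq q^{-c_2}$ holds as soon as $B\geq c_2\log q+\log A$, so it suffices to check that the hypothesis forces the relevant exponents to dominate a suitable multiple of $\log q$ (absorbing the polynomial-in-$q$ prefactor $(eq/q^*)^{q^*}$, whose logarithm is $q^*\log(eq/q^*)\leq q^*\log(eq)$, into the same scale).

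First I would handle the term $\exp(-3n/(16d_{q^*}))$. The hypothesis includes $d_{q^*}\log q\leq c_3 n$, i.e. $n/d_{q^*}\geq c_3^{-1}\log q$ — wait, rather $n/d_{q^*}\geq (\log q)/c_3$ is not quite it; from $d_{q^*}\log q\le c_3 n$ we get $n/d_{q^*}\ge (\log q)/c_3$ only if $c_3\le 1$, so I would instead read it as $n/d_{q^*}\ge c_3^{-1}$ times nothing — more carefully, $d_{q^*}\log q\le c_3 n$ gives $3n/(16 d_{q^*})\ge (3/(16c_3))\log q$, hence this term is at most $q^{-3/(16c_3)}$, which is $\le (1/2)q^{-c_2}$ once $c_3$ is chosen small enough (depending on $c_2$) that $3/(16c_3)\ge c_2+1$, say. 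The small-$q$ regime $q=1$ or $q=2$ can be absorbed by enlarging constants, or one simply notes the bound is vacuous there.

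Next I would handle the main term. Write $N:=(1-\rho_{q^*}^2)\kappa$ for brevity (this macro is only for the sketch, not for the final text). The exponent is $c_1 n^2 N^2/(\sigma^4 d_{q^*}+\sigma^2 n N)$. Using the standard splitting $x/(a+b)\ge (1/2)\min(x/a,x/b)$, this exponent is at least $(c_1/2)\min(n^2N^2/(\sigma^4 d_{q^*}),\,nN/\sigma^2)$. The first hypothesis $\sigma^2\sqrt{q^*d_{q^*}\log(eq/q^*)}\le c_3 n N$ gives $n^2N^2/(\sigma^4 d_{q^*})\ge c_3^{-2}q^*\log(eq/q^*)$; the second hypothesis $\sigma^2 q^*\log(eq/q^*)\le c_3 nN$ gives $nN/\sigma^2\ge c_3^{-1}q^*\log(eq/q^*)$. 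Hence the exponent is at least $(c_1/(2))c_3^{-1}\min(c_3^{-1},1)\,q^*\log(eq/q^*)=:c_4\,q^*\log(eq/q^*)$ with $c_4$ depending only on $c_1,c_3$. Since $q^*\log(eq/q^*)\ge \log(eq/q^*)\ge \log q$ (using $q^*\ge 1$ and $q\ge q^*$, for $q\ge q^*$; the edge case $q=q^*$ gives $\log e=1$, still a fixed lower bound we can fold into constants) and also $q^*\log(eq/q^*)\ge \log((eq/q^*)^{q^*})$, we get
\[
4\left(\tfrac{eq}{q^*}\right)^{q^*}\exp\!\left(-c_1\tfrac{n^2N^2}{\sigma^4 d_{q^*}+\sigma^2 nN}\right)\le 4\exp\!\left(-(c_4-1)q^*\log\tfrac{eq}{q^*}\right)\le 4\,q^{-(c_4-1)},
\]
which is $\le (1/2)q^{-c_2}$ provided $c_3$ is small enough that $c_4-1\ge c_2$ and $q$ is large enough that $4\le q$; the finitely many small $q$ are again absorbed by constants. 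Choosing $c_3$ to satisfy both smallness requirements simultaneously — it depends only on $c_1$ and $c_2$ — and substituting back into \eqref{eq:qj2} yields $\mathbb{P}(J_0\subseteq\hat J_0)\ge 1-\mathbb{P}(\mathcal{E}_{\delta,q^*}^c)-q^{-c_2}$, as claimed.

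The only mildly delicate point is bookkeeping: one must verify that a single choice of $c_3$ (function of $c_1,c_2$ alone, not of $q,q^*,n,\sigma,\kappa,\rho_{q^*}$) makes all three sub-estimates work, and that the combinatorial prefactor $(eq/q^*)^{q^*}$ is genuinely dominated by the exponential rather than merely comparable to it — which is why it is important that the hypothesis bounds $\sigma^2 q^*\log(eq/q^*)$ (not just $\sigma^2\log q$) by $c_3 nN$ and likewise includes the $\sqrt{q^*d_{q^*}\log(eq/q^*)}$ term. The edge cases $q^*=q$ and very small $q$ are trivial to absorb and I would dispatch them in a line. No serious obstacle is expected; this is a routine "turn the hypothesis into the conclusion" deduction from \eqref{eq:qj2}.
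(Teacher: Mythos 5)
Your proposal follows exactly the route the paper intends: the paper derives Corollary \ref{mtacf} by simply asserting that it follows ``from \eqref{eq:qj2}'', i.e.\ by checking that under the stated condition the two explicit error terms in \eqref{eq:qj2} sum to at most $q^{-c_2}$, which is precisely the computation you carry out. Your handling of the term $\exp(-3n/(16d_{q^*}))$ via $d_{q^*}\log q\leq c_3n$, and of the main term via the splitting $x/(a+b)\geq\tfrac12\min(x/a,x/b)$ together with the two hypotheses on $\sigma^2\sqrt{q^*d_{q^*}\log(eq/q^*)}$ and $\sigma^2q^*\log(eq/q^*)$, is correct, and the bookkeeping for a single $c_3=c_3(c_1,c_2)$ is fine.

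One intermediate inequality is wrong as written: you claim $\log(eq/q^*)\geq\log q$, which requires $q^*\leq e$ and fails for $q^*\geq 3$. The inequality you actually need, $q^*\log(eq/q^*)\geq\log q$ (which the paper itself invokes in the sentence after \eqref{eq:pc}, noting that $l\log(eq/l)$ is increasing in $l$), is nevertheless true: write $\log q=\log q^*+\log(q/q^*)\leq q^*+q^*\log(q/q^*)=q^*\log(eq/q^*)$, using $\log q^*\leq q^*$ and $q^*\geq 1$, $q\geq q^*$. With this one-line repair the argument is complete and matches the paper's.
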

\begin{remark}
Corollary \ref{mtacf} also holds if the assumptions of Remark~\ref{mtalc} are satisfied. In this case, the term $d_{q^*}\log q$ can be dropped.
\end{remark}
Next, we present another analysis of \eqref{eq:mres} in the case that $q^*=s$.
Then $J_0\subseteq \hat{J}_0$ if and only if $J_0= \hat{J}_0$. Thus, we can rewrite \eqref{eq:mres} as
\begin{align}
&\mathbb{P}\left( J_0\neq \hat{J}_0 \right)\leq\mathbb{P}\left(\mathcal{E}_{\delta,q^*}^c\right)+\exp\left(-\frac{3}{16}\frac{n}{d_{q^*}}\right)\nonumber\\
&+\sum_{l=1}^{s}\sum_{m=0}^{l}\binom{s}{l}\binom{q-s}{m}
4\exp\left( -c_1\frac{n^2(1-\rho_{s}^2)^2\kappa_l^2}{\sigma^4d_{l} +\sigma^2n(1-\rho_{s}^2)\kappa_l}\right).\label{eq:qj}
\end{align}
Applying  $\kappa_l\geq (1-\epsilon_s)l\kappa_1$, $d_l\leq ld_1$, and 
\begin{equation*}
\sum_{m=0}^{l}\binom{s}{l}\binom{q-s}{m}\leq q^{2l},
\end{equation*}
the last expression in \eqref{eq:qj} can be bounded by
\[\sum_{l=1}^{s}
4q^{2l}\exp\left( -c_1\frac{l(n(1-\rho_{s}^2)(1-\epsilon_s)\kappa_1)^2}{\sigma^4d_{1} +\sigma^2n(1-\rho_{s}^2)(1-\epsilon_s)\kappa_1}\right).\]
We obtain:
\begin{corollary}\label{mtac} Suppose that the assumptions of Theorem \ref{mta} hold. Moreover, suppose that $q^*=s$. Then for each constant $c_2>0$, there is a constant $c_3>0$ (depending only on $c_1$ and $c_2$) such that
\begin{equation}\label{eq:sceq}
\mathbb{P}\left( J_0\neq \hat{J}_0 \right)\leq\mathbb{P}\left(\mathcal{E}_{\delta,q^*}^c\right)+q^{-c_2},
\end{equation}
provided that 
\begin{equation*}
\max\left\{\frac{\sigma^2\sqrt{d_1\log q}}{(1-\rho_s^2)(1-\epsilon_s)\kappa_1},\frac{\sigma^2\log q}{(1-\rho_s^2)(1-\epsilon_s)\kappa_1},d_s\log q\right\}\leq c_3n.
\end{equation*}
\end{corollary}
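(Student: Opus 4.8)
The plan is to derive Corollary \ref{mtac} as a direct consequence of the bound \eqref{eq:qj} together with the simplifications already carried out in the excerpt, so the proof is essentially a calibration argument: we must choose $c_3$ small enough (depending only on $c_1$ and $c_2$) so that, under the stated conditions on $n$, the tail sum
\[
\sum_{l=1}^{s} 4q^{2l}\exp\left(-c_1\frac{l\,(n(1-\rho_s^2)(1-\epsilon_s)\kappa_1)^2}{\sigma^4 d_1 + \sigma^2 n(1-\rho_s^2)(1-\epsilon_s)\kappa_1}\right)
\]
is at most $q^{-c_2}/2$ (say), while the remaining term $\exp(-3n/(16 d_{q^*}))$ is at most $q^{-c_2}/2$. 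The term $\mathbb{P}(\mathcal{E}_{\delta,q^*}^c)$ is simply carried along unchanged, since it is controlled separately in Section \ref{grip}.

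First I would handle the deterministic term $\exp(-3n/(16 d_{q^*}))$: since $q^* = s$ here, the hypothesis $d_s \log q \leq c_3 n$ gives $n/d_{q^*} \geq (\log q)/c_3$, so this term is bounded by $\exp(-3\log q/(16 c_3)) = q^{-3/(16 c_3)}$, which is at most $q^{-c_2}/2$ as soon as $c_3$ is small enough relative to $c_2$ (for $q \geq 2$, absorbing the factor $1/2$ into a slightly smaller $c_3$). Second, for the main sum I would abbreviate $A = n(1-\rho_s^2)(1-\epsilon_s)\kappa_1$ and $B = \sigma^4 d_1 + \sigma^2 A$, so that the exponent is $-c_1 l A^2/B$; I would show that the two conditions $\sigma^2\sqrt{d_1\log q} \leq c_3(1-\rho_s^2)(1-\epsilon_s)\kappa_1 n$ and $\sigma^2 \log q \leq c_3(1-\rho_s^2)(1-\epsilon_s)\kappa_1 n$ together imply $A^2/B \geq c_4 \log q$ for a constant $c_4 = c_4(c_3)$ that can be made as large as we like by shrinking $c_3$. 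Indeed, $\sigma^4 d_1 = (\sigma^2\sqrt{d_1})^2 \leq c_3^2 A^2 (\log q)^{-1} \cdot (\text{const})$ and $\sigma^2 A \leq c_3 A^2 (\log q)^{-1}\cdot(\text{const})$ after using the second condition to bound $\sigma^2 \leq c_3 A/(\log q)$; hence $B \leq (\text{const})\, c_3\, A^2/\log q$, giving $A^2/B \geq (\text{const})\,c_3^{-1}\log q$. Choosing $c_3$ small makes $c_1 A^2/B \geq (c_2 + 2)\log q + \log 8$, so each summand is at most $4 q^{2l} q^{-l(c_2+2)} \cdot 8^{-l} \leq q^{-l c_2}\, (4 \cdot 8^{-1} q^{-2l+2l})^{\,}$; being careful, $4q^{2l}\exp(-c_1 l A^2/B) \leq 4 q^{2l} q^{-l(c_2+2)} = 4 q^{-l c_2} \cdot q^{-l(2) + 2l}$ — here I should instead aim for exponent $\geq l(c_2+3)\log q$ so that $4q^{2l - l(c_2+3)} = 4q^{-l(c_2+1)}$, and then $\sum_{l\geq 1} 4q^{-l(c_2+1)} \leq 8 q^{-(c_2+1)} \leq q^{-c_2}/2$ for $q \geq 2$. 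Combining the two bounds and adding $\mathbb{P}(\mathcal{E}_{\delta,q^*}^c)$ yields \eqref{eq:sceq}.

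The only genuinely delicate point is the bookkeeping in the geometric series: one must ensure the per-term decay rate $q^{-l(c_2+1)}$ (after cancelling the $q^{2l}$ against a sufficiently negative exponent) is uniform in $l$, which it is because the exponent in \eqref{eq:qj} is \emph{linear} in $l$ — this is exactly why the bounds $\kappa_l \geq (1-\epsilon_s) l \kappa_1$ and $d_l \leq l d_1$ were applied beforehand. Everything else is a routine calibration of constants, and no step requires any new idea beyond what is already in the excerpt; the constant $c_3$ depends only on $c_1$ and $c_2$ as claimed, since the universal constants arising from $A^2/B$ and from the combinatorial bound $\sum \binom{s}{l}\binom{q-s}{m} \leq q^{2l}$ are absolute. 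I would also remark that the final Remark (that the term $d_s \log q$ drops in the parametric case) is immediate from Remark \ref{mtalc}, because there the $\exp(-3n/(16d_{q^*}))$ term is absent and hence the condition $d_s\log q \leq c_3 n$ is no longer needed.
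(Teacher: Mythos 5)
Your proposal is correct and takes essentially the same route as the paper: the paper obtains Corollary \ref{mtac} exactly by reducing \eqref{eq:qj} via $\kappa_l\geq(1-\epsilon_s)l\kappa_1$, $d_l\leq ld_1$, and $\sum_{m\leq l}\binom{s}{l}\binom{q-s}{m}\leq q^{2l}$ to the displayed single sum and then calibrating $c_3$ so that the linear-in-$l$ exponent dominates $q^{2l}$, with the $\exp(-3n/(16d_{q^*}))$ term absorbed through $d_s\log q\leq c_3 n$. Your bookkeeping (including the self-correction of the exponent target) is sound, so nothing further is needed.
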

\begin{remark}
Corollary \ref{mtac} also holds if the assumptions of Remark \ref{mtalc} are satisfied. In this case, the term $d_{s}\log q$ can be dropped. In the special case that the covariates are also independent, we have $\rho_s=\epsilon_s=0$ and the conditions become
\begin{equation*}
\max\left\{\frac{\sigma^2\sqrt{d_1\log q}}{\kappa_1},\frac{\sigma^2\log q}{\kappa_1}\right\}\leq c_3n.
\end{equation*}
Note that these conditions are also necessary (see \cite{W4}).
\end{remark}


Finally, we mention that in the case $q^*=s$, the conditions in Corollary \ref{mtacf} and \ref{mtac} are both consequences of a more general condition. One can show that for each $c_2>0$, there is a $c_3>0$ such that \eqref{eq:sceq} holds, provided that
for $l=1,\dots,s$, 
\begin{equation}\label{eq:pc}
\max\left\{\frac{\sigma^2\sqrt{ld_{l}\log(eq/l)}}{(1-\rho_{s}^2)\kappa_l},\frac{\sigma^2l\log(eq/l)}{(1-\rho_{s}^2)\kappa_l},d_{s}\log q\right\}\leq c_3n.
\end{equation}
Note that Corollary \ref{mtacf} follows from the bounds $\kappa_l\geq \kappa$ and the fact that $l\log(eq/l)$ is increasing  in $l$ for $1\leq l\leq q$, and Corollary \ref{mtac} follows (up to the constant $e$ in the logarithm) from the bounds $\kappa_l\geq (1-\epsilon_s)l\kappa_1$ and $d_l\leq ld_1$ and the fact that $\log(eq/l)$ is decreasing in $l$.

\section{Structured random matrices and the event $\mathcal{E}_{\delta,q^*}$}\label{grip}
\subsection{Independent covariates and the RIP}\label{icrip} In this subsection, we suppose that $X_1,\dots,X_n$ are independent, which implies that the spaces $V_1,\dots,V_q$ are orthogonal in $L^2(\mathbb{P}^X)$. In this particular case, we rewrite the event $\mathcal{E}_{\delta,q^*}$ as a restricted (block)-isometry property. This allows us to apply known concentration inequalities.

For $j=1,\dots,q$, let $\{\phi_{jk}\}_{1\leq k\leq \dim V_j}$ be an orthonormal basis of $V_j$. Then we define the $n\times \dim V_j$-matrix
\[A_j=\frac{1}{\sqrt{n}}\left(\phi_{jk}(X_j^i)\right)_{1\leq i\leq n, 1\leq k\leq \dim V_j}\]
and for $J\subseteq\lbrace1,\dots,q\rbrace$, we define the $n\times d_J$-matrix
$A_J=\left(A_j\right)_{j\in J}$ (we abbreviate $A_{\{1,\dots,q\}}$ as $A$).
With these definitions, it is easy to see that $\mathcal{E}_{\delta,J}$ is the event such that
\[(1-\delta)\|z_J\|_2^2\leq \|A_Jz_J\|_2^2\leq (1+\delta)\|z_J\|_2^2\]
for all $z_J\in\mathbb{R}^{d_J}$. Here, we have used that the spaces $V_1,\dots,V_q$ are orthogonal. Thus, if we define
\[\delta_{q^*}=\max_{J\subseteq\lbrace1,\dots,q\rbrace, |J|\leq q^*}\|A_{J\cup J_0}^TA_{J\cup J_0}-I\|_{\operatorname{op}},\]
then we have \[\mathcal{E}_{\delta,q^*}=\left\{\delta_{q^*}\leq \delta\right\}.\]
The constant $\delta_{q^*}$ is bounded by the restricted isometry constant of order $d_{2q^*}$ of the matrix $A$ (see \cite[Definition 2.4]{R}). Note that the restricted isometry constant plays a prominent role in the theory of sparse recovery. Moreover, there exist many concentration inequalities for the restricted isometry constant in many ensembles of random matrices. We give two examples.
\begin{example}\label{lgm}
Consider the model $Y=\sum_{j=1}^qX_j\beta_j+\epsilon$, where the $X_j$ are independent centered Gaussian random variables and the $\beta_j$ are real numbers. Then $A$ is a Gaussian random matrix (the entries are independent Gaussian random variables, each with expectation zero and variance $1/n$), and \cite[Theorem 5.2]{BDDW} implies that there exist constants $c_3,c_4>0$ depending only on $\delta$ such that $\mathbb{P}(\delta_{q^*}\leq \delta)\geq 1-2\exp(-c_4n)$, provided that $q^*\log(q/q^*)\leq c_3n$. Combining this with Corollary \ref{mtac}, we obtain (in the case $q^*=s$) that $\mathbb{P}( J_0\neq \hat{J}_0 )\leq 2\exp(-c_4n)+q^{-c_2}$, provided that
\begin{equation*}
\max\left\{s\log(q/s),\frac{\sigma^2\log q}{\kappa_1}\right\}\leq c_3n.
\end{equation*}
These conditions are also known to be necessary (see, e.g., \cite[Section 2.6]{R} for the setting without noise and \cite[Theorem 2]{Wa} for the noisy setting).
\end{example} 
\begin{example}
Consider the nonparametric case where the $X_1,\dots,X_n$ are independent and uniformly distributed on $[0,1]$. Then the trigonometric bases of the $V_j$ are also orthonormal bases and we can apply \cite[Theorem 8.4]{R} (recall that the constant $\delta_{q^*}$ is bounded by the restricted isometry constant of order $d_{2q^*}$) which says that there are constants $c_3,c_4>0$ such that for $\delta\leq 1/2$,
$\mathbb{P}(\delta_{q^*}> \delta)\leq \exp(-c_4n\delta^2/d_{2q^*})$, provided that $d_{2q^*}\log^2(100d_{2q^*})\log(4d_q)\log(10n)\leq c_3n\delta^2$.
\end{example} 

\subsection{A general upper bound for $\mathbb{P}(\mathcal{E}_{\delta,q^*}^c)$} In this section, we give a general upper bound for the probability that the event $\mathcal{E}_{\delta,q^*}^c$ occurs. This upper bound is a generalization of \cite[Theorem 3.3]{RV2} and \cite[Theorem 8.1 and 8.4]{R}. The derivation will consist in two steps. The first step is the following generalization of Theorem 3.6 by Rudelson and Vershynin \cite{RV2}. 
\begin{proposition}\label{rudver} Let Assumptions \ref{anglesep} and \ref{hass} be satisfied. Then there is a universal constant $C_1>0$ such that 
\begin{equation}\label{eq:rudvereq}
\mathbb{E}\left[\sup_{g\in V_J,|J|\leq 2q^*,\|g\|\leq 1}\left|\|g\|_n^2-\| g\|^2\right|\right]\leq C_1\sqrt{\frac{d_{2q*}}{c(1-\epsilon_{2q^*})n}}\log^2(d_q\vee n),
\end{equation}
provided that the last expression is smaller than $1$.
\end{proposition}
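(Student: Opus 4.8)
The plan is to control the expected supremum by a symmetrization-and-chaining argument, following the structure of Rudelson--Vershynin's Theorem 3.6 but taking care of the additive block structure and the non-orthogonality of the spaces $V_j$. First I would reduce to a Rademacher process: introduce i.i.d.\ Rademacher signs $\varepsilon_1,\dots,\varepsilon_n$ and, by the standard symmetrization inequality, bound the left-hand side of \eqref{eq:rudvereq} by $2\,\mathbb{E}\sup_{g}\big|\frac1n\sum_{i=1}^n\varepsilon_i g^2(X^i)\big|$, the supremum being over $g\in V_J$, $|J|\le 2q^*$, $\|g\|\le1$. The key point is that, conditionally on $X^1,\dots,X^n$, this is a Rademacher chaos-type quantity associated with the finite-dimensional space $W:=\sum_{|J|\le 2q^*}V_J = V_{\{1,\dots,q\}}$ restricted to $g$ with $\|g\|\le 1$ and $\operatorname{supp}$ spread over at most $2q^*$ blocks; the relevant complexity is governed by the dimension $d_{2q^*}$, not by $d_q$.

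The second step is to linearize. Writing $g = \sum_{j\in J}\sum_k z_{jk}\phi_{jk}$ in an $L^2(\mathbb{P}^X)$-orthonormal-up-to-$(1\pm\epsilon_{2q^*})$ basis (here Lemma~\ref{ric} and the definition of $\epsilon_{2q^*}$ enter: on any $J$ with $|J|\le 2q^*$, $\|g\|^2$ is comparable to $\sum\|g_j\|^2$, hence to $\|z\|_2^2$ up to the factor $(1-\epsilon_{2q^*})^{-1}$, which is exactly where the $(1-\epsilon_{2q^*})$ in the bound comes from), the process becomes $z\mapsto \langle z, \big(\frac1n\sum_i \varepsilon_i \Phi(X^i)\Phi(X^i)^T\big) z\rangle$ with $\Phi(X^i)$ the vector of basis evaluations. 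One then bounds $\mathbb{E}\sup_{\|z\|_2\le C}|z^T M z|$ where $M = \frac1n\sum_i \varepsilon_i \Phi(X^i)\Phi(X^i)^T$ restricted to $2q^*$-block-sparse $z$. I would use the decoupling/contraction inequalities for Rademacher chaos together with a Dudley-type entropy integral for the set of block-sparse unit vectors; the $\log^2(d_q\vee n)$ factor arises from the covering-number estimates for this set combined with a truncation of the sup-norm $\|g\|_\infty$ (controlled using the second bound in Lemma~\ref{lemappr} and Assumption~\ref{hass}), exactly as in \cite[Theorem 8.4]{R}. The density bound $p_j\ge c$ supplies the factor $1/c$ by relating $\|\cdot\|$ to the Lebesgue $L^2$-norm on each coordinate.

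I expect the main obstacle to be handling the non-orthogonality of the $V_j$ cleanly while keeping the bound in terms of $d_{2q^*}$ rather than $d_{2q^*}\cdot q^*$ or worse. Concretely, passing from the geometric quantity $\|g\|^2$ to the algebraic quantity $\|z\|_2^2$ requires a uniform two-sided comparison over all $2q^*$-block configurations; Lemma~\ref{ric} gives this with constant $(1-\epsilon_{2q^*})^{-1}$, but one must verify that applying it does not destroy the block structure needed for the chaining. A second delicate point is the truncation argument: one needs a sup-norm bound on $g\in V_J$ with $\|g\|\le1$ uniform over $|J|\le 2q^*$, and deriving this from Lemma~\ref{lemappr} (which bounds $\|h_j-\Pi_{V_j}h_j\|_\infty$ for $h_j$ in a fixed Sobolev ball, not a bound on $\|\phi_{jk}\|_\infty$ directly) requires going through the trigonometric basis and the density bounds, contributing one of the two logarithmic factors. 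Once these two points are in place, assembling the entropy integral and optimizing the truncation level yields \eqref{eq:rudvereq}, with the proviso ``smaller than $1$'' ensuring the truncation step is non-vacuous.
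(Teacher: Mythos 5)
Your outline follows the same skeleton as the paper's proof (symmetrization, parametrization by coefficient vectors, the inclusion of the index set in a multiple of the sparse unit ball $D_2^{d_{2q^*},d_q}$ via Assumption~\ref{hass} and \eqref{eq:riceq}, and the Rudelson--Vershynin entropy integral for sparse vectors in the $\|\cdot\|_X$-norm), but two of your concrete steps are off. First, the sup-norm control you need is \emph{not} obtainable from Lemma~\ref{lemappr}, which bounds the approximation error $\|h_j-\Pi_{V_j}h_j\|_\infty$ for $h_j$ in a Sobolev ball and says nothing about $\|g\|_\infty$ for $g\in V_J$. No truncation argument is needed at all: the paper simply uses that the evaluation vectors $x_i$ of the trigonometric basis functions are uniformly bounded by $2\sqrt{2}$, which is exactly the hypothesis under which the entropy-integral estimate of \cite[(3.7)]{RV2} applies and produces the $\log^{2}(d_q\vee n)$ factor. (The uniform bound $\|g\|_\infty^2\le \varphi_{2q^*}^2 d_{2q^*}\|g\|^2$ of Appendix~\ref{zb} is used later, for Talagrand's inequality in Theorem~\ref{rudvertal}, not here.) As written, your ``second delicate point'' is a genuine gap: the route through Lemma~\ref{lemappr} would not close.

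Second, the proviso that the right-hand side of \eqref{eq:rudvereq} be smaller than $1$ has nothing to do with making a truncation non-vacuous. It arises from a self-bounding step at the end: after Dudley's inequality one obtains a conditional bound of the form $E_1\le C(q^*,q,n)\sup_{\alpha\in T}\|g_\alpha\|_n$, and taking expectations and applying Cauchy--Schwarz gives $E\le C(q^*,q,n)(E+1)^{1/2}$, which yields $E\le 2C(q^*,q,n)$ precisely when $C(q^*,q,n)\le 1$. You should replace the truncation discussion by this bootstrap; with that and the correct source of the sup-norm bound, your argument coincides with the paper's.
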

A proof of Proposition \ref{rudver} is given in Appendix \ref{rudverpr}. The second step is an application of Talagrand's inequality (see \cite{Tal}). Here, we state a version of Talagrand's inequality presented in \cite[Corollary 2]{BM}:
\begin{thm}[Talagrand's inequality]\label{talineq} Consider $n$ independent and identically distributed
random variables $X^1,\dots,X^n$ taking values in some measurable space $(S,\mathcal{B})$. Let $\mathcal{G}$ be a countable family of real-valued measurable functions on $(S,\mathcal{B})$ that are uniformly bounded by some constant $b$. Let $Z=\sup_{g\in\mathcal{G}}\left|\frac{1}{n}\sum_{i=1}^ng(X^i)-\mathbb{E}\left[g(X^i)\right]\right|$ and $v=\sup_{g\in\mathcal{G}}\mathbb{E}\left[ g^2(X^1)\right]$. Then for every positive number $\lambda$, 
\begin{equation*}
\mathbb{P}\left( Z \geq 2\mathbb{E}\left[Z\right]+\lambda\right)\leq 3\exp\left(-n\kappa\left(\frac{\lambda^2}{v}\wedge\frac{\lambda}{b}\right)\right),
\end{equation*}
where $\kappa$ is a universal constant.
\end{thm}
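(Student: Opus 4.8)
The plan is to prove this by the entropy method of Ledoux, which turns a concentration statement for the supremum of an empirical process into a one-dimensional differential inequality for its logarithmic moment generating function. Since $\mathcal{G}$ is countable, by monotone convergence it suffices to treat a finite family (all bounds being uniform in $|\mathcal{G}|$), so that $Z$ is a genuine measurable function of the independent variables $X^1,\dots,X^n$. After centering we may assume $\mathbb{E}[g(X^1)]=0$ for all $g\in\mathcal{G}$, so that $W:=nZ=\sup_{g\in\mathcal{G}}\bigl|\sum_{i=1}^n g(X^i)\bigr|$.

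First I would record the bounded-differences and self-bounding structure of $W$. For each $i$, let $\tilde{X}^i$ be an independent copy of $X^i$ and let $W^{(i)}$ be the value of the supremum with $X^i$ replaced by $\tilde{X}^i$; choosing a near-maximizer $g_0$ of $W$ gives the pointwise bounds
\[
\|W-W^{(i)}\|_\infty\le b,\qquad (W-W^{(i)})_+^2\le\bigl(g_0(X^i)-g_0(\tilde{X}^i)\bigr)^2 .
\]
Integrating the second bound over $\tilde{X}^i$ and summing in $i$ controls $\sum_i\mathbb{E}_i[(W-W^{(i)})_+^2]$ (with $\mathbb{E}_i$ the partial expectation in the $i$th pair of coordinates) by the random ``wimpy variance'' $V:=\sup_{g\in\mathcal{G}}\sum_{i=1}^n g^2(X^i)+nv$. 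Feeding $\|W-W^{(i)}\|_\infty\le b$ and this variance control into the sub-additivity (tensorization) of entropy together with the modified logarithmic Sobolev inequality
\[
\mathrm{Ent}\bigl[e^{\theta W}\bigr]\le\sum_{i=1}^n\mathbb{E}\Bigl[e^{\theta W}\,\psi\bigl(-\theta(W-W^{(i)})\bigr)\Bigr],\qquad\psi(u)=e^u-u-1,
\]
yields a Bernstein-type differential inequality for $\varphi(\theta):=\log\mathbb{E}\bigl[e^{\theta(W-\mathbb{E}W)}\bigr]$ with a quadratic term governed by $\mathbb{E}[V]$ and a linear term governed by $b$. Integrating it gives, up to a universal constant, $\varphi(\theta)\le C\,\mathbb{E}[V]\,\theta^2/(1-b\theta)$ for $0<\theta<1/b$, and the Chernoff bound then produces a tail of the form $\mathbb{P}(W\ge\mathbb{E}W+u)\le\exp\bigl(-c\bigl((u^2/\mathbb{E}[V])\wedge(u/b)\bigr)\bigr)$.

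It remains to convert this into the stated form, and this is where I expect the real work to be. One has to (i) bound $\mathbb{E}[V]$ by $nv$ plus a multiple of $b\,\mathbb{E}[W]$ via a symmetrization argument, and (ii) carry out the truncation refinement of Rio and Bousquet so that the multiplicative loss coming from $\mathbb{E}[V]$ and from re-centering ends up being absorbed exactly into the factor $2$ in front of $\mathbb{E}[Z]$, while $v=\sup_g\mathbb{E}[g^2(X^1)]$ (and not a crude multiple of it) survives in the exponent. Concretely, one truncates each $g\in\mathcal{G}$ at level $b$, estimates the crossing terms by $\sum_i(W-W^{(i)})_+^2$ and by $\mathbb{E}[W]$ separately, and uses the concavity of the entropy functional to trade these corrections for exact constants; the countability of $\mathcal{G}$ and the uniform bound $b$ are exactly what make the measurability and the truncation work. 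Undoing the reductions ($W=nZ$, and finite $\to$ countable $\mathcal{G}$ by monotone convergence) then finishes the proof. Since the statement is quoted from \cite{BM} and ultimately from \cite{Tal}, the paper may alternatively just invoke it; the above is how I would reconstruct it from scratch.
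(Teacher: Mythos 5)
The paper does not prove this statement at all: it is quoted verbatim as a known concentration inequality, with the proof delegated to \cite{BM} (Corollary 2) and ultimately to Talagrand \cite{Tal}. Your entropy-method outline is therefore not comparable to anything in the paper itself, but it is the standard route by which the cited version is actually established, and the architecture you describe is correct: reduction to a finite class by monotone convergence, centering, tensorization of entropy combined with the modified logarithmic Sobolev inequality, a Bernstein-type differential inequality for the log-Laplace transform of $W=nZ$, a Chernoff bound, and finally the symmetrization/contraction estimate $\mathbb{E}[\sup_g\sum_i g^2(X^i)]\lesssim nv+b\,\mathbb{E}[W]$ to replace the random variance proxy by $nv$. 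Your observation that the factor $2$ in front of $\mathbb{E}[Z]$ is precisely what absorbs the $b\,\mathbb{E}[W]$ contribution is the right accounting: with $u=\mathbb{E}[W]+n\lambda$ one gets $u^2/\mathbb{E}[V]\gtrsim n\left(\lambda^2/v\wedge\lambda/b\right)$ by splitting into the cases $nv\geq b\,\mathbb{E}[W]$ and $nv<b\,\mathbb{E}[W]$. Two caveats. First, what you have written is a plan rather than a proof: the tensorization step, the integration of the differential inequality, and the symmetrization lemma are invoked rather than carried out, so as it stands your text is at the same logical level as the paper's citation, only more explicit about which machinery is being used. Second, the Rio--Bousquet truncation refinement you bring in under step (ii) is not needed for this statement; it is only required when one wants sharp constants in front of $\mathbb{E}[Z]$ and in the variance term, whereas here both the leading factor $2$ and the unspecified universal constant $\kappa$ leave enough slack for the cruder Ledoux--Massart version of the argument to suffice.
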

We want to apply Talagrand's inequality to the family $\mathcal{G}=\{g^2:g\in V_J,|J|\leq 2q^*,\|g\|\leq 1\}$. This family is not countable, but the value of $Z$ does not change if we restrict the supremum to a countable and dense subset (note that the $V_J$ are finite-dimensional spaces). For $J\subseteq\lbrace1,\dots,q\rbrace$, let
\[
 \varphi_J=\frac{1}{\sqrt{d_J}}\sup_{0\neq g\in V_J}\frac{\Vert g\Vert_\infty}{\Vert g\Vert}.
\]
Moreover, let $\varphi_{2q^*}=\max_{|J|\leq 2q^*}\varphi_{J}$. Under Assumptions \ref{anglesep} and \ref{hass}, we have
\begin{equation}\label{eq:zbl}
\varphi_{2q^*}^2\leq \frac{2}{c(1-\epsilon_{2q^*})},
\end{equation}
the details are given in Appendix \ref{zb}. Therefore, for all $g^2\in \mathcal{G}$, we have
\[\|g\|_\infty^2\leq \varphi_{2q^*}^2d_{2q^*}\|g\|^2\leq\frac{2d_{2q^*}}{c(1-\epsilon_{2q^*})}.\]
Using this and $\mathbb{E}\left[g^4(X^1)\right]\leq \|g\|_\infty^2\|g\|^2$, we conclude that $b,v\leq 2d_{2q^*}/(c(1-\epsilon_{2q^*}))$.
Now, suppose that the last expression in \eqref{eq:rudvereq} is smaller than $\delta/4$, $0<\delta<1$. Then Theorem \ref{talineq}, applied with $\lambda=\delta/2$, yields
\begin{align*}
\mathbb{P}\left(\mathcal{E}_{\delta,q^*}^c\right)&\leq\mathbb{P}\left(\sup_{g\in V_J,|J|\leq 2q^*,\|g\|\leq 1}\left|\|g\|_n^2-\| g\|^2\right|> \delta\right)\\
&\leq 3\exp\left(-n\kappa\frac{c(1-\epsilon_{2q^*})\delta^2}{8d_{2q^*}}\right).
\end{align*}
We have shown:
\begin{thm}\label{rudvertal} Let Assumptions \ref{anglesep} and \ref{hass} be satisfied. Let $c_4=c\kappa/8$ and $c_3=\sqrt{c}/(4C_1)$, where $C_1$ and $\kappa$ are the constants in Proposition \ref{rudver} and Talagrand's inequality, respectively. Let $\delta\in(0,1)$. Suppose that 
\begin{equation*}
\sqrt{\frac{d_{2q^*}}{(1-\epsilon_{2q^*})n}}\log^2(d_q\vee n)\leq c_3\delta.
\end{equation*}
Then 
\[\mathbb{P}\left(\mathcal{E}_{\delta,q^*}^c\right)\leq 3\exp\left(-c_4\frac{(1-\epsilon_{2q^*})n\delta^2}{d_{2q^*}}\right).\]
\end{thm}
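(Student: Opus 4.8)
The plan is to deduce Theorem~\ref{rudvertal} by feeding the expectation bound of Proposition~\ref{rudver} into Talagrand's inequality (Theorem~\ref{talineq}), applied to the function class
\[
\mathcal{G}=\{g^2:g\in V_J,\ |J|\leq 2q^*,\ \|g\|\leq 1\},
\]
with associated supremum $Z=\sup_{g\in\mathcal{G}}\bigl|\|g\|_n^2-\|g\|^2\bigr|$. First I would reduce the claim to a one-sided deviation bound for $Z$. Every set of the form $J\cup J_0$ with $|J|\le q^*$ has cardinality at most $q^*+s\le 2q^*$, and $\|\cdot\|_n^2-\|\cdot\|^2$ is homogeneous of degree $2$; normalizing an arbitrary nonzero $g_{J\cup J_0}\in V_{J\cup J_0}$ to unit $L^2(\mathbb{P}^X)$-norm then shows $\{Z\le\delta\}\subseteq\mathcal{E}_{\delta,J\cup J_0}$ for each such $J$, and hence $\{Z\le\delta\}\subseteq\mathcal{E}_{\delta,q^*}$. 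So it suffices to bound $\mathbb{P}(Z>\delta)$. The class $\mathcal{G}$ is not countable, but each $V_J$ is finite-dimensional, so $Z$ is unchanged if the supremum is restricted to a fixed countable dense subset; this is the only measurability subtlety and is dispatched exactly as in the paragraph preceding the theorem.

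Next I would pin down the two parameters in Talagrand's inequality. Using the bound \eqref{eq:zbl} on $\varphi_{2q^*}^2$ together with $\|g\|\le1$, every $g^2\in\mathcal{G}$ satisfies $\|g\|_\infty^2\le\varphi_{2q^*}^2 d_{2q^*}\|g\|^2\le 2d_{2q^*}/(c(1-\epsilon_{2q^*}))$, giving the uniform bound $b\le 2d_{2q^*}/(c(1-\epsilon_{2q^*}))$; from $\mathbb{E}[g^4(X^1)]\le\|g\|_\infty^2\|g\|^2$ the variance parameter also satisfies $v\le 2d_{2q^*}/(c(1-\epsilon_{2q^*}))$. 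For the expectation term, the hypothesis $\sqrt{d_{2q^*}/((1-\epsilon_{2q^*})n)}\log^2(d_q\vee n)\le c_3\delta$ with $c_3=\sqrt{c}/(4C_1)$ is precisely what forces the right-hand side of \eqref{eq:rudvereq} to be at most $\delta/4$; in particular it is $<1$, so the proviso of Proposition~\ref{rudver} is met and $\mathbb{E}[Z]\le\delta/4$.

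Finally I would apply Theorem~\ref{talineq} with $\lambda=\delta/2$, so that $2\mathbb{E}[Z]+\lambda\le\delta/2+\delta/2=\delta$ and therefore
\[
\mathbb{P}(Z>\delta)\le\mathbb{P}\bigl(Z\ge 2\mathbb{E}[Z]+\lambda\bigr)\le 3\exp\Bigl(-n\kappa\bigl((\lambda^2/v)\wedge(\lambda/b)\bigr)\Bigr).
\]
Substituting the bounds on $b$ and $v$ gives $\lambda^2/v\ge c(1-\epsilon_{2q^*})\delta^2/(8d_{2q^*})$ and $\lambda/b\ge c(1-\epsilon_{2q^*})\delta/(4d_{2q^*})$; since $\delta<1$ we have $\delta^2/8\le\delta/4$, so the minimum is $\ge c(1-\epsilon_{2q^*})\delta^2/(8d_{2q^*})$, and with $c_4=c\kappa/8$ this yields $\mathbb{P}(\mathcal{E}_{\delta,q^*}^c)\le\mathbb{P}(Z>\delta)\le 3\exp\bigl(-c_4(1-\epsilon_{2q^*})n\delta^2/d_{2q^*}\bigr)$, as claimed. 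The one genuinely nontrivial ingredient is Proposition~\ref{rudver}; everything else is bookkeeping of constants, and the single point requiring care is the passage from control of $\bigl|\|g\|_n^2-\|g\|^2\bigr|$ on the unit ball of each $V_J$, $|J|\le 2q^*$, to the scale-invariant two-sided bound defining $\mathcal{E}_{\delta,q^*}$.
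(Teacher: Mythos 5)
Your proposal is correct and follows essentially the same route as the paper: bound $b$ and $v$ via \eqref{eq:zbl}, use Proposition \ref{rudver} with the choice $c_3=\sqrt{c}/(4C_1)$ to get $\mathbb{E}[Z]\le\delta/4$, and apply Talagrand's inequality with $\lambda=\delta/2$. The only addition is that you spell out the reduction $\{Z\le\delta\}\subseteq\mathcal{E}_{\delta,q^*}$ via homogeneity, which the paper leaves implicit.
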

\subsection{Conditions for variable selection}\label{dsae} In this section, we combine Corollary \ref{mtac} with Theorem \ref{rudvertal}.
Therefore, suppose that the assumptions of Theorem \ref{mta} hold. To simplify the exposition, we will treat the quantities $\alpha=\min_j\alpha_j$, $K=\max_{j}K_j$, and $c$ from Assumption \ref{hass} and the geometric quantities $\rho_{s}$, $\epsilon_{2s}$, and $\epsilon'_{s}$ as constants. Moreover, we assume that $q^*=s$ and that $q\geq n$. Recall from \eqref{eq:lowbound} that in this case it suffices to choose the $m_j$ of size constant times $(s/\kappa)^{1/(2\alpha)}$.
By the inequalities $\kappa_l\geq l(1-\epsilon_l)\kappa_1$, we have that $\kappa$ is bounded from below by a constant times $\kappa_1$, which in turn implies that the $m_j$ can be chosen of size constant times $(s/\kappa_1)^{1/(2\alpha)}$.
Inserting this into Corollary \ref{mtac} and Theorem \ref{rudvertal} (let, e.g., $\delta=1/2$), we obtain:
\begin{corollary}\label{jflg} Make the above assumptions. Then for each constant $c_2>0$, there are constants $c_3>0$ and $c_5>0$ such that
\begin{equation*}
\mathbb{P}\left( J_0\neq \hat{J}_0 \right)\leq q^{-c_2}+q^{-c_5\log^3 q},
\end{equation*}
provided that 
\begin{equation}\label{eq:lfk}
\max\left\{\frac{\sigma^2s^{1/(4\alpha)}\sqrt{\log q}}{\kappa_1^{(4\alpha+1)/(4\alpha)}},\frac{\sigma^2\log q}{\kappa_1},\frac{s^{(2\alpha+1)/(2\alpha)}\log^4 q}{\kappa_1^{1/(2\alpha)}}\right\}\leq c_3n.
\end{equation}
\end{corollary}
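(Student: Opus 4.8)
\textbf{Proof proposal for Corollary \ref{jflg}.}

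The plan is to substitute the simplifications $q^* = s$, $q \geq n$, and the treatment of $\alpha$, $K$, $c$, $\rho_s$, $\epsilon_{2s}$, $\epsilon_s'$ as constants into Corollary \ref{mtac} and Theorem \ref{rudvertal}, and then show that the three quantities displayed in \eqref{eq:lfk} control the three conditions that arise. First I would recall from \eqref{eq:lowbound} that under the present normalization it suffices to take $m_j$ equal to a constant times $(s/\kappa)^{1/(2\alpha)}$, and then use $\kappa \geq (1-\epsilon_s) s \, \kappa_1 / s \cdot$ (more precisely $\kappa_l \geq l(1-\epsilon_l)\kappa_1$ with $l = s$, together with $\kappa = \kappa_s$ up to the minimum over $J \subseteq J_0$), so that $\kappa$ is bounded below by a constant times $\kappa_1$ and hence $m_j$ can be taken of size constant times $(s/\kappa_1)^{1/(2\alpha)}$. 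Consequently $d_l = \max_{|J|=l} d_J \leq l \max_j m_j$ is of size constant times $l (s/\kappa_1)^{1/(2\alpha)}$; in particular $d_1 \asymp (s/\kappa_1)^{1/(2\alpha)}$ and $d_s = d_{q^*} \asymp s (s/\kappa_1)^{1/(2\alpha)} = s^{(2\alpha+1)/(2\alpha)} \kappa_1^{-1/(2\alpha)}$, and likewise $d_{2q^*} = d_{2s} \asymp s^{(2\alpha+1)/(2\alpha)}\kappa_1^{-1/(2\alpha)}$.

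Next I would feed these into the two conditions of Corollary \ref{mtac}. The first term there, $\sigma^2 \sqrt{d_1 \log q} / ((1-\rho_s^2)(1-\epsilon_s)\kappa_1)$, becomes (up to constants) $\sigma^2 \sqrt{(s/\kappa_1)^{1/(2\alpha)} \log q}\,/\,\kappa_1 = \sigma^2 s^{1/(4\alpha)} \sqrt{\log q} \, \kappa_1^{-1/(4\alpha)-1} = \sigma^2 s^{1/(4\alpha)}\sqrt{\log q}\,\kappa_1^{-(4\alpha+1)/(4\alpha)}$, which is exactly the first entry of \eqref{eq:lfk}. The second term, $\sigma^2 \log q / ((1-\rho_s^2)(1-\epsilon_s)\kappa_1)$, is a constant times $\sigma^2 \log q / \kappa_1$, the second entry. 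The third term $d_s \log q$ becomes a constant times $s^{(2\alpha+1)/(2\alpha)} \kappa_1^{-1/(2\alpha)} \log q$, which is dominated by the third entry of \eqref{eq:lfk} (which has $\log^4 q$ in place of $\log q$). So Corollary \ref{mtac} gives $\mathbb{P}(J_0 \neq \hat J_0) \leq \mathbb{P}(\mathcal{E}_{\delta,q^*}^c) + q^{-c_2}$ whenever $\eqref{eq:lfk}$ holds with a sufficiently small constant $c_3$.

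It then remains to bound $\mathbb{P}(\mathcal{E}_{\delta,q^*}^c)$ via Theorem \ref{rudvertal} with $\delta = 1/2$. Its hypothesis reads $\sqrt{d_{2q^*}/((1-\epsilon_{2q^*})n)}\,\log^2(d_q \vee n) \leq c_3/2$; since $d_q \leq q \max_j m_j$ is polynomial in $q$ and $n \leq q$, we have $\log(d_q \vee n) \lesssim \log q$, so the hypothesis is implied by $d_{2s} \log^4 q \leq c_3' n$, i.e.\ (using $d_{2s} \asymp s^{(2\alpha+1)/(2\alpha)}\kappa_1^{-1/(2\alpha)}$) by the third entry of \eqref{eq:lfk}. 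Under this hypothesis Theorem \ref{rudvertal} yields $\mathbb{P}(\mathcal{E}_{\delta,q^*}^c) \leq 3\exp(-c_4 (1-\epsilon_{2q^*}) n \delta^2 / d_{2q^*})$; since $d_{2q^*} \log^4 q \leq c_3' n$ forces $n / d_{2q^*} \geq c_3'^{-1} \log^4 q$, the exponent is $\leq -c\,\log^4 q = -c\log q \cdot \log^3 q$, so this probability is at most $q^{-c_5 \log^3 q}$ for a suitable $c_5 > 0$. Adding the two bounds gives the claimed $q^{-c_2} + q^{-c_5 \log^3 q}$. The only mildly delicate point, which I would present carefully rather than skip, is verifying that all the "constants" being suppressed (those coming from Lemma \ref{lemappr}, from $\epsilon_s, \epsilon_s', \epsilon_{2s}, \rho_s$, and from the combinatorial passage $\kappa \gtrsim \kappa_1$) combine into a single pair $c_3, c_5$ depending only on the allowed quantities; there is no real obstacle here, just bookkeeping, and the genuine mathematical content has already been done in Theorem \ref{mta}, Corollary \ref{mtac}, and Theorem \ref{rudvertal}.
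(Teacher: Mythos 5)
Your proposal is correct and follows essentially the same route as the paper, which derives the corollary by exactly this substitution of $m_j\asymp(s/\kappa_1)^{1/(2\alpha)}$ and $d_l\lesssim l(s/\kappa_1)^{1/(2\alpha)}$ into Corollary \ref{mtac} and Theorem \ref{rudvertal} with $\delta=1/2$. Your write-up merely makes explicit the bookkeeping (the identification of the three terms in \eqref{eq:lfk} and the passage from $\exp(-c\,n/d_{2q^*})$ to $q^{-c_5\log^3 q}$) that the paper leaves to the reader.
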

\begin{remark} 
In \cite{W4}, it is shown that the condition
\begin{equation*}
\max\left\{\frac{\sigma^2\sqrt{\log q}}{\kappa_1^{(4\alpha+1)/(4\alpha)}},\frac{\sigma^2\log q}{\kappa_1}\right\}\leq c_3n
\end{equation*}
is optimal in an additive Gaussian white noise model. Obviously, this condition is weaker than \eqref{eq:lfk}. In \eqref{eq:lfk},  we have the additional factor $s^{1/(4\alpha)}$ in the first term, and we have an additional term coming from the event $\mathcal{E}_{\delta,q^*}$ (note that this event disappears in the Gaussian white noise framework). 
\end{remark}
\subsection{Estimation of single components}
The proposed selection criterion can be seen as a method to reduce the dimension of the model. We start with $n$ independent observations of a sparse additive model with $q$ covariates and an unknown subset $J_0$ of indices corresponding to the non-zero components, and we end up with a subset $\hat{J}_0$ such that $|\hat{J}_0|\leq q^*$ and $J_0\subseteq\hat{J}_0$ with high probability. More precisely, if $\{J_0\subseteq\hat{J}_0\}$ holds, then we have successfully reduced the model \eqref{eq:sam} to
\begin{equation}\label{eq:samr}
Y=\sum_{j\in\hat{J}_0}f_j(X_j)+\epsilon.
\end{equation}
We now consider the problem of estimating a single component $f_j$ of the model \eqref{eq:sam} with $j\in J_0$. We may assume without loss of generality that $j=1$. To simplify the exposition, we make the same assumptions as in the previous Section \ref{dsae}. 
We split the sample into two parts.  More precisely, we assume that we observe an even number of independent copies $(Y^1,X^1),\dots,(Y^{2n},X^{2n})$ of $(Y,X)$. The estimator $\hat{J}_0$ of $J_0$ is constructed as in Section \ref{selcrit} using the sample $(Y^1,X^1),\dots,(Y^{n},X^{n})$, and the estimator $\hat{f}_1$ of $f_1$ is constructed as in \cite[Section 2.3]{W} using $\hat{J}_0$ and the sample $(Y^{n+1},X^{n+1}),\dots,(Y^{2n},X^{2n})$. We have
\[
\mathbb{E}\left[\Vert f_1-\hat{f}_1^*\Vert^2\right]\leq \mathbb{E}\left[1_{\{\hat{J}_0= J_0\}}\Vert f_1-\hat{f}_1^*\Vert^2\right]+(\left\|f_1\right\|+k_n)^2\mathbb{P}\left(  J_0\neq \hat{J}_0 \right)
\]
meaning that we can apply \cite[Corollary 2]{W} to the first term (note that \cite[Assumption 1 and 2]{W} are a consequence of Assumption \ref{anglesep}) and Corollary \ref{jflg} to the second term. 
\begin{corollary} Make the same assumptions as in Corollary \ref{jflg}. Then there are constants $c_3>0$, $C>0$ such that
\[\mathbb{E}\left[\Vert f_1-\hat{f}_1^*\Vert^2\right]\leq
Cn^{\frac{-2\alpha_1}{2\alpha_1+1}},\]
provided that \eqref{eq:lfk} is satisfied and that
\begin{equation*}
s^{(2\alpha+1)/(2\alpha)}n^{\frac{2\alpha_1}{2\alpha(2\alpha_1+1)}}\log^4 n\leq c_3n.
\end{equation*}
\end{corollary}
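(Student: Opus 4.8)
The plan is to combine the oracle-type bound for the component estimator $\hat f_1^*$ on the event $\{\hat J_0 = J_0\}$ with the variable selection guarantee from Corollary \ref{jflg}, via the displayed decomposition
\[
\mathbb{E}\left[\Vert f_1-\hat{f}_1^*\Vert^2\right]\leq \mathbb{E}\left[1_{\{\hat{J}_0= J_0\}}\Vert f_1-\hat{f}_1^*\Vert^2\right]+(\left\|f_1\right\|+k_n)^2\mathbb{P}\left(  J_0\neq \hat{J}_0 \right).
\]
First I would treat the first term. Since $\hat J_0$ is built from the first half-sample and $\hat f_1^*$ from the (independent) second half-sample, on the event $\{\hat J_0 = J_0\}$ the estimator $\hat f_1^*$ is exactly the estimator of \cite[Section 2.3]{W} applied to the correctly specified reduced model \eqref{eq:samr} with index set $J_0$. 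Noting that Assumptions 1 and 2 of \cite{W} follow from Assumption \ref{anglesep} (as remarked in the excerpt), I would invoke \cite[Corollary 2]{W}, which gives $\mathbb{E}[1_{\{\hat J_0 = J_0\}}\Vert f_1-\hat f_1^*\Vert^2]\leq \mathbb{E}[\Vert f_1-\hat f_1^*\Vert^2 \mid \hat J_0 = J_0]\lesssim n^{-2\alpha_1/(2\alpha_1+1)}$, the rate one would get if the other components were known. This step also fixes the truncation level $k_n$ entering the second term; from the construction in \cite{W} one has $k_n$ growing at most polynomially (indeed like a small power of $n$), so $(\Vert f_1\Vert + k_n)^2$ is at most a polynomial factor in $n$.

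Next I would handle the second term. By Corollary \ref{jflg}, under condition \eqref{eq:lfk} (which is assumed) we have $\mathbb{P}(J_0\neq\hat J_0)\leq q^{-c_2}+q^{-c_5\log^3 q}$, and since $q\geq n$ this is at most $n^{-c_2}+n^{-c_5\log^3 n}$. Choosing the constant $c_2$ in Corollary \ref{jflg} large enough — say $c_2$ exceeding $2\alpha_1/(2\alpha_1+1)$ plus the polynomial exponent coming from $(\Vert f_1\Vert+k_n)^2$ — makes the product $(\Vert f_1\Vert+k_n)^2\mathbb{P}(J_0\neq\hat J_0)$ of smaller order than $n^{-2\alpha_1/(2\alpha_1+1)}$. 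The extra sample-size requirement $s^{(2\alpha+1)/(2\alpha)}n^{2\alpha_1/(2\alpha(2\alpha_1+1))}\log^4 n\leq c_3 n$ is exactly what is needed so that the third term of \eqref{eq:lfk}, namely $s^{(2\alpha+1)/(2\alpha)}\kappa_1^{-1/(2\alpha)}\log^4 q \leq c_3 n$, remains compatible with $\kappa_1$ being as small as $n^{-2\alpha_1/(2\alpha_1+1)}$ (the smallest $\kappa_1$ for which the target rate is still meaningful, since one cannot estimate a component of squared norm below the rate); substituting this worst-case $\kappa_1$ into \eqref{eq:lfk} and simplifying yields precisely the stated condition. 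Adding the two bounds gives $\mathbb{E}[\Vert f_1-\hat f_1^*\Vert^2]\leq C n^{-2\alpha_1/(2\alpha_1+1)}$.

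The main obstacle, and the only genuinely non-mechanical point, is verifying that \cite[Corollary 2]{W} can legitimately be applied to the first term: one must check that conditioning on $\{\hat J_0 = J_0\}$ and on the first half-sample leaves the second half-sample an i.i.d.\ sample from the model \eqref{eq:samr} with the \emph{fixed} (now deterministic, given the event) index set $J_0$, so that the cited oracle bound — which is stated for a known index set — applies verbatim, and that its hypotheses (the analogues of \cite[Assumption 1 and 2]{W}, which are implied by Assumption \ref{anglesep} restricted to subsets of $J_0$, hence of size at most $s=q^*$) are in force. Everything else is bookkeeping: tracking the polynomial size of $k_n$, choosing $c_2$ large, and checking that the rate-optimal choice $\kappa_1 \asymp n^{-2\alpha_1/(2\alpha_1+1)}$ turns \eqref{eq:lfk} into the displayed side condition.
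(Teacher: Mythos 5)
Your proposal follows the paper's argument exactly: the same sample-splitting construction, the same decomposition of the risk over $\{\hat{J}_0=J_0\}$ and its complement, and the same two ingredients, namely \cite[Corollary 2]{W} for the first term and Corollary \ref{jflg} for the second. The one point where you deviate is the role of the side condition $s^{(2\alpha+1)/(2\alpha)}n^{2\alpha_1/(2\alpha(2\alpha_1+1))}\log^4 n\le c_3 n$: reading it as a worst-case substitution of $\kappa_1\asymp n^{-2\alpha_1/(2\alpha_1+1)}$ into \eqref{eq:lfk} does not quite work, since \eqref{eq:lfk} is assumed separately with the actual $\kappa_1$ and the side condition is only binding when $\kappa_1$ is \emph{large}; it is more naturally the sample-size hypothesis of \cite[Corollary 2]{W} itself, where the second-stage approximating spaces for the nuisance components must satisfy $m_j\gtrsim (s\,n^{2\alpha_1/(2\alpha_1+1)})^{1/(2\alpha)}$ so that their approximation bias lies below the target rate, giving a total dimension of order $s^{(2\alpha+1)/(2\alpha)}n^{2\alpha_1/(2\alpha(2\alpha_1+1))}$ whose product with $\log^4 n$ must be $O(n)$. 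As written, your argument verifies only the geometric hypotheses of the oracle bound and never actually consumes the side condition where it is needed, so you should make that verification explicit; otherwise the structure and conclusion match the paper.
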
 
\section{Outline of the proof of Theorem \ref{mta}}\label{opt}

\subsection{The finite sample geometry} In this section, we present empirical versions of Assumption \ref{anglesep} and Proposition \ref{fundlem}. Throughout this section, let $0<\delta<1$ be the constant in Theorem \ref{mta}. 
Recall that in Section \ref{tmr}, we defined the events 
\begin{equation*}
\mathcal{E}_{\delta,J}=\left\lbrace (1-\delta)\Vert g_J\Vert^2\leq\Vert g_J\Vert_n^2\leq (1+\delta)\Vert g_J\Vert^2\ \text{ for all }g_J\in V_J\right\rbrace
\end{equation*}
for $J\subseteq\lbrace 1,\dots,q\rbrace$. 
Written in the equivalent form of Remark \ref{angeqsin}, we have:  
\begin{lemma}\label{lemaa} Let Assumption \ref{anglesep} be satisfied. Let $J_1,J_2\subseteq\lbrace1,\dots,q\rbrace$ be two subsets such that $J_1\cap J_2=\emptyset$ and $\vert J_1\vert, \vert J_2\vert\leq q^*$. If $\mathcal{E}_{\delta,J_1\cup J_2}$ holds, then we have
\begin{equation}\label{eq:empanglesep}
 \left\| g_{J_1}+g_{J_2}\right\|_n^2\geq  \frac {(1-\delta)}{(1+\delta)}(1-\rho_{q^*}^2)\left\| g_{J_1}\right\|_n^2
\end{equation} 
for all $g_{J_1}\in V_{J_1}$, $g_{J_2}\in V_{J_2}$.
\end{lemma}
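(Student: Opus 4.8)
The plan is to transfer the population-level inequality in Remark~\ref{angeqsin} to the empirical norm by using the equivalence of $\|\cdot\|$ and $\|\cdot\|_n$ on the finite-dimensional space $V_{J_1\cup J_2}$ that is guaranteed by the event $\mathcal{E}_{\delta,J_1\cup J_2}$. First I would observe that $g_{J_1}\in V_{J_1}\subseteq H_{J_1}$ and $g_{J_2}\in V_{J_2}\subseteq H_{J_2}$, so that Remark~\ref{angeqsin}, applied with $h_{J_1}=g_{J_1}$ and $h_{J_2}=g_{J_2}$, gives $\|g_{J_1}+g_{J_2}\|^2\geq(1-\rho_{q^*}^2)\|g_{J_1}\|^2$ in the $L^2(\mathbb{P}^X)$-norm.

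Next I would use that $g_{J_1}+g_{J_2}\in V_{J_1\cup J_2}=V_{J_1}+V_{J_2}$ and that $g_{J_1}\in V_{J_1}\subseteq V_{J_1\cup J_2}$, so the event $\mathcal{E}_{\delta,J_1\cup J_2}$ controls the empirical norm of both vectors. Concretely, on $\mathcal{E}_{\delta,J_1\cup J_2}$ we have $\|g_{J_1}+g_{J_2}\|_n^2\geq(1-\delta)\|g_{J_1}+g_{J_2}\|^2$ and $\|g_{J_1}\|^2\geq(1+\delta)^{-1}\|g_{J_1}\|_n^2$. Chaining these with the population inequality yields
\[
\|g_{J_1}+g_{J_2}\|_n^2\geq(1-\delta)\|g_{J_1}+g_{J_2}\|^2\geq(1-\delta)(1-\rho_{q^*}^2)\|g_{J_1}\|^2\geq\frac{1-\delta}{1+\delta}(1-\rho_{q^*}^2)\|g_{J_1}\|_n^2,
\]
which is exactly \eqref{eq:empanglesep}.

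There is essentially no obstacle here: the only point that deserves a line of care is checking that the two vectors whose norms we compare indeed lie in the subspace $V_{J_1\cup J_2}$ on which $\mathcal{E}_{\delta,J_1\cup J_2}$ provides the two-sided bound, and that $V_{J_1}\subseteq H_{J_1}$, $V_{J_2}\subseteq H_{J_2}$ so that Assumption~\ref{anglesep} (in the form of Remark~\ref{angeqsin}) is applicable with $\rho_{q^*}$. Since $|J_1|,|J_2|\leq q^*$ by hypothesis, the angle bound $\rho_{q^*}$ is the correct one. The disjointness $J_1\cap J_2=\emptyset$ is used only to invoke Remark~\ref{angeqsin}; it is not needed for the norm equivalence. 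Thus the proof is a short three-step chain, and I expect the write-up to be only a few lines.
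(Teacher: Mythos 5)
Your proof is correct and is essentially identical to the paper's own argument: the same three-step chain, applying the lower empirical bound to $g_{J_1}+g_{J_2}$, the population angle inequality from Remark~\ref{angeqsin}, and the upper empirical bound to $g_{J_1}$, all justified because both functions lie in $V_{J_1\cup J_2}$. Nothing is missing.
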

\begin{proof}
Under the assumptions of Lemma \ref{lemaa}, we have
\begin{align*}
 \Vert g_{J_1}+g_{J_2}\Vert^2_n &\geq (1-\delta)\Vert g_{J_1}+g_{J_2}\Vert^2\\
   						&\geq (1-\delta)(1-\rho_{q^*}^2)\Vert g_{J_1}\Vert^2\\
   						&\geq \frac{(1-\delta)}{(1+\delta)}(1-\rho_{q^*}^2)\Vert g_{J_1}\Vert^2_n.
\end{align*}
This completes the proof.
\end{proof}
Applying \eqref{eq:empanglesep} as in the proof of Proposition \ref{fundlem}, we obtain:
\begin{proposition}\label{fundlemem}
Let Assumption \ref{anglesep} be satisfied.
Let $J\subseteq\lbrace1,\dots,q\rbrace$ be a subset such that $\vert J\vert\leq q^*$ and $J_0\setminus J\neq\emptyset$. Let $v=\sum_{j\in J_0}v_j$ with $v_j\in V_j$ for $j\in J_0$. If $\mathcal{E}_{\delta,J\cup J_0}$ holds, then we have
\begin{equation*}
\Vert\hat{\Pi}_{J_0}v\Vert_n^2-\Vert \hat{\Pi}_{J}v\Vert_n^2=\Vert v-\hat{\Pi}_{J}v\Vert_n^2\geq\frac{(1-\delta)}{(1+\delta)}(1-\rho_{q^*}^2)\Big\Vert \sum_{j\in J_0\setminus J}v_j\Big\Vert_n^2.
\end{equation*} 
\end{proposition}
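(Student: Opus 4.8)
The plan is to mirror the proof of Proposition \ref{fundlem} exactly, replacing the $L^2(\mathbb{P}^X)$-geometry by the empirical geometry provided by Lemma \ref{lemaa}. First I would establish the equality. Since $v=\sum_{j\in J_0}v_j$ with $v_j\in V_j\subseteq H_j$, the vector $(v(X^1),\dots,v(X^n))^T$ lies in the subspace onto which $\hat{\Pi}_{J_0}$ projects, so $\hat{\Pi}_{J_0}v=v$ (as vectors in $\mathbb{R}^n$). The identity $\Vert\hat{\Pi}_{J_0}v\Vert_n^2-\Vert\hat{\Pi}_Jv\Vert_n^2=\Vert v-\hat{\Pi}_Jv\Vert_n^2$ is then just the Pythagorean theorem in $\mathbb{R}^n$ with the Euclidean inner product scaled by $1/n$: $\hat{\Pi}_Jv$ is the orthogonal projection of $v$ onto a subspace, hence $v-\hat{\Pi}_Jv$ is orthogonal to $\hat{\Pi}_Jv$, and $\Vert v\Vert_n^2=\Vert\hat{\Pi}_Jv\Vert_n^2+\Vert v-\hat{\Pi}_Jv\Vert_n^2$.

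Next I would decompose $v$ as in the proof of Proposition \ref{fundlem}. Write $v=\sum_{j\in J_0\cap J}v_j+\sum_{j\in J_0\setminus J}v_j=:v_{J_0\cap J}+v_{J_0\setminus J}$. Because $v_{J_0\cap J}$ already lies in $V_J$ (as a function, hence its evaluation vector lies in the range of $\hat{\Pi}_J$), we have $\hat{\Pi}_Jv=v_{J_0\cap J}+\hat{\Pi}_Jv_{J_0\setminus J}$, and therefore $v-\hat{\Pi}_Jv=v_{J_0\setminus J}-\hat{\Pi}_Jv_{J_0\setminus J}$. Now $v_{J_0\setminus J}\in V_{J_0\setminus J}$ and $\hat{\Pi}_Jv_{J_0\setminus J}\in V_J$, and since $J_0\setminus J\neq\emptyset$ these are disjoint index sets each of cardinality $\leq q^*$ (note $|J_0\setminus J|\le |J_0|\le q^*$ and $|J|\le q^*$). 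Moreover $V_{(J_0\setminus J)\cup J}\subseteq V_{J_0\cup J}$, so the event $\mathcal{E}_{\delta,J\cup J_0}$ implies $\mathcal{E}_{\delta,(J_0\setminus J)\cup J}$. Hence Lemma \ref{lemaa} applies with $J_1=J_0\setminus J$, $g_{J_1}=v_{J_0\setminus J}$, $J_2=J$, $g_{J_2}=-\hat{\Pi}_Jv_{J_0\setminus J}$, giving
\begin{equation*}
\Vert v-\hat{\Pi}_Jv\Vert_n^2=\Vert v_{J_0\setminus J}-\hat{\Pi}_Jv_{J_0\setminus J}\Vert_n^2\geq\frac{(1-\delta)}{(1+\delta)}(1-\rho_{q^*}^2)\Vert v_{J_0\setminus J}\Vert_n^2=\frac{(1-\delta)}{(1+\delta)}(1-\rho_{q^*}^2)\Big\Vert\sum_{j\in J_0\setminus J}v_j\Big\Vert_n^2,
\end{equation*}
which is the claimed inequality.

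I do not anticipate a genuine obstacle here, since the argument is a faithful transcription of Proposition \ref{fundlem} with \eqref{eq:mmm} replaced by \eqref{eq:empanglesep}. The one point requiring a little care is the bookkeeping of events: one must check that $\mathcal{E}_{\delta,J\cup J_0}$ is strong enough to invoke Lemma \ref{lemaa} for the pair $(J_0\setminus J, J)$, which holds because $(J_0\setminus J)\cup J\subseteq J_0\cup J$ and $\mathcal{E}_{\delta,\cdot}$ is monotone under inclusion of index sets (a norm equivalence valid on $V_{J_0\cup J}$ restricts to one on the subspace $V_{(J_0\setminus J)\cup J}$). The other minor point is to keep the distinction between a function in $V_J$ and its evaluation vector in $\mathbb{R}^n$ consistent with the abbreviation $\hat{\Pi}_Jh$ introduced in Section \ref{selcrit}; with that convention all the projection identities above are literally the finite-dimensional Pythagorean theorem.
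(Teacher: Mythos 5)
Your proof is correct and follows exactly the route the paper intends: the paper gives no separate argument for this proposition beyond the remark that one applies \eqref{eq:empanglesep} as in the proof of Proposition \ref{fundlem}, and your transcription (Pythagoras in $\mathbb{R}^n$ for the equality, the decomposition $v=v_{J_0\cap J}+v_{J_0\setminus J}$, and Lemma \ref{lemaa} with $J_1=J_0\setminus J$, $J_2=J$) is precisely that argument. The only simplification available is that $(J_0\setminus J)\cup J=J_0\cup J$, so the event you need is literally $\mathcal{E}_{\delta,J\cup J_0}$ and no monotonicity step is required.
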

By decomposing $f$ as $v+f-v$  with $v=\sum_{j\in J_0}\Pi_{V_j}f_j$, we can apply Proposition \ref{fundlemem} to $v$ and Lemma \ref{testdev2} to $f-v$. The result is the following empirical version of Proposition \ref{fundlem}.
\begin{proposition}\label{fln}
Let Assumption \ref{anglesep} and Assumption \ref{hass} be satisfied. Suppose that \eqref{eq:lowbound} is satisfied for $j=1,\dots,q$. Let $J\subseteq\lbrace1,\dots,q\rbrace$ be a subset such that $\vert J\vert\leq q^*$ and $J_0\setminus J\neq\emptyset$. Let $l=|J_0\setminus J|$. If $\mathcal{E}_{\delta,J\cup J_0}\cap\mathcal{A}$ holds, then we have
\begin{equation}\label{eq:thrdvers}
\Vert\hat{\Pi}_{J_0}f\Vert_n^2-\Vert \hat{\Pi}_{J}f\Vert_n^2
\geq\frac{1}{2}\frac{(1-\delta)^2}{(1+\delta)}(1-\rho_{q*}^2)\kappa_l, 
\end{equation} 
provided that
\begin{equation}\label{eq:ccond}
(2/3)(1-\sqrt{c'})^2-8\frac{(1+\delta)}{(1-\delta)^2}c'\geq 1/2.
\end{equation}
\end{proposition}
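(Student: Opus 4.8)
The plan is to combine the two preceding ingredients — Proposition \ref{fundlemem} applied to the ``good'' piece $v=\sum_{j\in J_0}\Pi_{V_j}f_j$ and Lemma \ref{testdev2} applied to the ``error'' piece $r:=f-v$ — via a triangle-inequality argument in the empirical norm. First I would record, exactly as in the proof of Proposition \ref{fundlem}, that since $\hat\Pi_{J_0}$ is the orthogonal projection onto the span of the $V_j$, $j\in J_0$, and since $f=\sum_{j\in J_0}f_j$ with $f_j\in H_j$, the projection $\hat\Pi_{J_0}v$ equals $v$ up to the part of $f$ outside the $V_j$'s; more precisely, $\Vert\hat\Pi_{J_0}f\Vert_n^2-\Vert\hat\Pi_Jf\Vert_n^2=\Vert f-\hat\Pi_Jf\Vert_n^2$ by the Pythagorean identity in $(\mathbb R^n,\|\cdot\|_n)$, because the column space of $\hat\Pi_J$ is contained in that of $\hat\Pi_{J_0}$. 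So it suffices to lower bound $\Vert f-\hat\Pi_Jf\Vert_n$.

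Next I would write $f-\hat\Pi_Jf=(v-\hat\Pi_Jv)+(r-\hat\Pi_Jr)$ and use the reverse triangle inequality: $\Vert f-\hat\Pi_Jf\Vert_n\ge\Vert v-\hat\Pi_Jv\Vert_n-\Vert r-\hat\Pi_Jr\Vert_n\ge\Vert v-\hat\Pi_Jv\Vert_n-\Vert r\Vert_n$, the last step because $\hat\Pi_J$ is a contraction. On the event $\mathcal E_{\delta,J\cup J_0}$, Proposition \ref{fundlemem} gives
\[
\Vert v-\hat\Pi_Jv\Vert_n^2\ge\frac{1-\delta}{1+\delta}(1-\rho_{q^*}^2)\Big\Vert\sum_{j\in J_0\setminus J}\Pi_{V_j}f_j\Big\Vert_n^2,
\]
and I then need a lower bound for $\big\Vert\sum_{j\in J_0\setminus J}\Pi_{V_j}f_j\big\Vert_n$ in terms of $\kappa_l$. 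For that I would again peel off the approximation error: $\sum_{j\in J_0\setminus J}\Pi_{V_j}f_j=\sum_{j\in J_0\setminus J}f_j-\sum_{j\in J_0\setminus J}(f_j-\Pi_{V_j}f_j)$, so its empirical norm is at least $\big\Vert\sum_{j\in J_0\setminus J}f_j\big\Vert_n$ minus $\big\Vert\sum_{j\in J_0\setminus J}(f_j-\Pi_{V_j}f_j)\big\Vert_n\le\big\Vert r\big\Vert_n$ (since the summands appearing in $r$ over $J_0$ restricted to $J_0\setminus J$ have empirical norm bounded by that of the full sum — actually one should bound this by $\Vert f-v\Vert_n=\Vert r\Vert_n$ directly, using that removing a subset of the orthogonal-in-$L^2$ error terms cannot increase the empirical norm only up to the event $\mathcal E_{\delta,\cdot}$; the cleaner route is to bound $\big\Vert\sum_{j\in J_0\setminus J}(f_j-\Pi_{V_j}f_j)\big\Vert_n$ by $\big\Vert\sum_{j\in J_0}(f_j-\Pi_{V_j}f_j)\big\Vert_n=\Vert r\Vert_n$ after noting the cross terms, or simply to absorb everything into a single application of $\mathcal E$). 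Finally, still on $\mathcal E_{\delta,J\cup J_0}$, $\big\Vert\sum_{j\in J_0\setminus J}f_j\big\Vert_n^2\ge(1-\delta)\big\Vert\sum_{j\in J_0\setminus J}f_j\big\Vert^2\ge(1-\delta)\kappa_l$ by the definition of $\kappa_l$, while on $\mathcal A$ we have $\Vert r\Vert_n^2\le 2c'(1-\rho_{q^*}^2)\kappa\le 2c'(1-\rho_{q^*}^2)\kappa_l$ (using $\kappa\le\kappa_l$ — or rather $\kappa\le\kappa_l$ fails in general; instead use $\kappa_l\ge\kappa$ is false too, so one keeps $\kappa$ and notes $\kappa\le\kappa_l$ does hold since $\kappa=\min_{\emptyset\ne J\subseteq J_0}\Vert\sum_{j\in J}f_j\Vert^2\le\kappa_l$).

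Putting the pieces together, on $\mathcal E_{\delta,J\cup J_0}\cap\mathcal A$ one gets, with $t^2:=(1-\rho_{q^*}^2)\kappa_l$,
\[
\Vert f-\hat\Pi_Jf\Vert_n\ \ge\ \sqrt{\tfrac{1-\delta}{1+\delta}}\Big(\sqrt{(1-\delta)}\,t-\sqrt{\tfrac{2c'}{(1-\delta)}}\,t\Big)_+\ -\ \sqrt{2c'}\,t\sqrt{1-\rho_{q^*}^2}^{-1}\!\!,
\]
after collecting the $\sqrt{1-\rho_{q^*}^2}$ factors carefully; squaring and simplifying is exactly where condition \eqref{eq:ccond} enters — it is precisely the inequality that makes the resulting numerical constant at least $\tfrac12\tfrac{(1-\delta)^2}{1+\delta}$. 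So the last step is: expand the square $(a-b)^2\ge a^2-2ab\ge$ (use $2ab\le \eta a^2+\eta^{-1}b^2$ or just the crude $(a-b)^2\ge\tfrac23 a^2-2b^2$ bound, which is what the factor $2/3$ in \eqref{eq:ccond} signals), substitute the numerical values, and check that \eqref{eq:ccond} yields the claimed constant $\tfrac12\tfrac{(1-\delta)^2}{1+\delta}$.

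The main obstacle I anticipate is bookkeeping rather than conceptual: one must be careful that the error term $r$ restricted to $J_0\setminus J$ is controlled by $\Vert r\Vert_n$ (this is where the orthogonality of the $\Pi_{V_j}f_j-f_j$ in $L^2(\mathbb P^X)$, or an extra invocation of $\mathcal E_{\delta,\cdot}$, is needed), and one must track the exact powers of $(1-\delta)/(1+\delta)$ and $(1-\rho_{q^*}^2)$ through two nested triangle inequalities so that the final constant matches \eqref{eq:thrdvers}. The cleanest packaging is: set $a=\Vert\sum_{j\in J_0\setminus J}\Pi_{V_j}f_j\Vert_n$, bound $a\ge\sqrt{(1-\delta)\kappa_l}-\Vert r\Vert_n$, then $\Vert f-\hat\Pi_Jf\Vert_n^2\ge\tfrac{1-\delta}{1+\delta}(1-\rho_{q^*}^2)a^2$ must be compared after subtracting $\Vert r\Vert_n$; a single application of $(x-y)^2\ge\tfrac23x^2-2y^2$ at each of the two stages, followed by $\Vert r\Vert_n^2\le 2c'(1-\rho_{q^*}^2)\kappa\le 2c'(1-\rho_{q^*}^2)\kappa_l$, reduces everything to the scalar inequality \eqref{eq:ccond}.
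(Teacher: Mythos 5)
Your overall architecture (decompose $f=v+r$ with $v=\sum_{j\in J_0}\Pi_{V_j}f_j$, apply Proposition \ref{fundlemem} to $v$ and the event $\mathcal{A}$ to $r$, and reduce everything to the scalar condition \eqref{eq:ccond}) is the right one, but two of your steps are genuinely broken. First, the opening identity $\Vert\hat\Pi_{J_0}f\Vert_n^2-\Vert\hat\Pi_Jf\Vert_n^2=\Vert f-\hat\Pi_Jf\Vert_n^2$ is false in the nonparametric setting: it would require $\hat\Pi_{J_0}f=f$, which fails because $f_j\notin V_j$ (the analogous identity in Proposition \ref{fundlem} uses $\Pi_{J_0}f=f$, and in Proposition \ref{fundlemem} it is stated only for $v\in V_{J_0}$, where $\hat\Pi_{J_0}v=v$ does hold). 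Your justification via column-space containment is also wrong, since $J\not\subseteq J_0$ in general. The correct identity is $\Vert\hat\Pi_{J_0}f\Vert_n^2-\Vert\hat\Pi_Jf\Vert_n^2=\Vert f-\hat\Pi_Jf\Vert_n^2-\Vert f-\hat\Pi_{J_0}f\Vert_n^2$, and the subtracted term must be controlled (e.g.\ by $\Vert f-v\Vert_n^2$ on $\mathcal{A}$, using that $\hat\Pi_{J_0}$ is a best approximation); the paper instead avoids this route entirely by expanding $\Vert\hat\Pi_{J_0}(v+r)\Vert_n^2-\Vert\hat\Pi_J(v+r)\Vert_n^2$ directly, using self-adjointness and contractivity of the projections to reduce the cross terms to $2\langle\hat\Pi_{J_0}v-\hat\Pi_Jv,\,f-v\rangle_n$, and then applying $2xy\le 3x^2+y^2/3$ — which is exactly where the $2/3$ and the $8$ in \eqref{eq:ccond} originate.

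Second, your control of $\big\Vert\sum_{j\in J_0\setminus J}(f_j-\Pi_{V_j}f_j)\big\Vert_n$ by $\Vert r\Vert_n$ has no justification: the empirical norm of a partial sum is not bounded by that of the full sum, the event $\mathcal{E}_{\delta,\cdot}$ applies only to elements of $V_J$ (so it says nothing about the approximation errors $f_j-\Pi_{V_j}f_j$, nor about $\sum_{j\in J_0\setminus J}f_j$, to which you also apply it), and $\mathcal{A}$ controls only the full sum over $J_0$. The paper's fix is to leave the empirical norm as soon as possible: on $\mathcal{E}_{\delta,J_0}$ one has $\Vert v_{J_0\setminus J}\Vert_n^2\ge(1-\delta)\Vert v_{J_0\setminus J}\Vert^2$ (legitimate, since $v_{J_0\setminus J}\in V_{J_0\setminus J}$), and then the triangle inequality is applied in $L^2(\mathbb{P}^X)$, where $\Vert f_{J_0\setminus J}-v_{J_0\setminus J}\Vert^2\le(1+\epsilon'_{q^*})\sum_{j\in J_0\setminus J}\Vert f_j-\Pi_{V_j}f_j\Vert^2\le c'(1-\rho_{q^*}^2)\kappa\le c'\kappa_l$ follows from \eqref{eq:ric2} and Lemma \ref{lemappr}, yielding $\Vert v_{J_0\setminus J}\Vert_n^2\ge(1-\delta)(1-\sqrt{c'})^2\kappa_l$. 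With these two repairs your argument would go through, but as written both steps are gaps, and your final display (with the stray $\sqrt{1-\rho_{q^*}^2}^{-1}$ factor) does not yet verify that the constants assemble into \eqref{eq:thrdvers}.
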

A proof of Proposition \ref{fln} is given in Appendix \ref{appa}. In the absence of noise, Proposition \ref{fundlemem} and \ref{fln} already prove Theorem \ref{mta}. In fact, if the event $\mathcal{E}_{\delta,q^*}\cap\mathcal{A}$ holds, then \eqref{eq:sc} selects a subset $\hat{J}_0\subseteq\{1,\dots,q\}$ with $|\hat{J}_0|\leq q^*$ and $J_0\subseteq \hat{J}_0$. Proposition \ref{fln} applies to the nonparametric setting, while Proposition \ref{fundlemem} applies if the components $f_j$ satisfy $f_j\in V_j$, the latter being a commonly used setting in the theory of compressive sensing (see, e.g., \cite{ra10} and the references therein). 
\subsection{End of the proof of Theorem \ref{mta}} We have
\begin{multline*}
\mathbb{P}\left(  J_0\nsubseteq \hat{J}_0 \right)=\mathbb{P}\left(  J_0\setminus\hat{J}_0 \neq\emptyset\right)\\
\leq \mathbb{P}\left(\exists J\subseteq\lbrace 1,\dots,q\rbrace, |J|\leq q^*\text{ with }J_0\setminus J \neq\emptyset\right.\\\left.\text{ and }\Vert\hat{\Pi}_J\mathbf{Y}\Vert^2_n-d_J/n\geq\Vert\hat{\Pi}_{J_0}\mathbf{Y}\Vert^2_n-d_{J_0}/n\right).
\end{multline*}
Applying the union bound, we obtain
\begin{multline*}
\mathbb{P}\left(  J_0\nsubseteq \hat{J}_0 \right)\leq\mathbb{P}\left(\mathcal{E}_{\delta,q^*}^c\right)+\mathbb{P}\left(\mathcal{A}^c\right)\\+\sum_{\substack{ J\subseteq\lbrace 1,\dots,q\rbrace\\ |J|\leq q^*,J_0\setminus J \neq\emptyset}}\mathbb{P}\left( \mathcal{E}_{\delta,J\cup J_0}\cap\mathcal{A}\cap\Vert\hat{\Pi}_J\mathbf{Y}\Vert^2_n-d_J/n\geq\Vert\hat{\Pi}_{J_0}\mathbf{Y}\Vert^2_n-d_{J_0}/n\right),
\end{multline*}
where $\mathcal{A}$ is the event defined in Proposition \ref{fln}. We have:
\begin{lemma}\label{testdev} Let Assumptions \ref{anglesep} and \ref{hass} be satisfied. Suppose that \eqref{eq:lowbound} is satisfied for $j=1,\dots,q$. Let $J\subseteq\lbrace1,\dots,q\rbrace$ be a subset such that $\vert J\vert\leq q^*$ and $J_0\setminus J\neq\emptyset$. Let $l=|J_0\setminus J|$. Then there is a constant $c_1$ depending only on $\delta$ (given explicitly in the proof) such that
\begin{align*}
\mathbb{P}&\left(\mathcal{E}_{\delta,J\cup J_0}\cap\mathcal{A}\cap \Vert\hat{\Pi}_J\mathbf{Y}\Vert^2_n-\sigma^2d_J/n\geq\Vert\hat{\Pi}_{J_0}\mathbf{Y}\Vert^2_n-\sigma^2d_{J_0}/n\right)\\
&\leq 4\exp\left( -c_1\frac{n^2(1-\rho_{q^*}^2)^2\kappa_l^2}{\sigma^4d_{q^*-s+l} +\sigma^2n(1-\rho_{q^*}^2)\kappa_l}\right).
\end{align*}
\end{lemma}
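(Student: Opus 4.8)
## Proof strategy for Lemma \ref{testdev}

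The plan is to work on the event $\mathcal{E}_{\delta,J\cup J_0}\cap\mathcal{A}$ and reduce the comparison of the two penalized residuals to a deviation inequality for a chi-square-type quantity. Write $\mathbf{Y}=f+\boldsymbol{\epsilon}$ (abusing notation for $(f(X^1),\dots,f(X^n))^T$), and expand both $\|\hat{\Pi}_J\mathbf{Y}\|_n^2$ and $\|\hat{\Pi}_{J_0}\mathbf{Y}\|_n^2$ using $\hat{\Pi}_J\mathbf{Y}=\hat{\Pi}_Jf+\hat{\Pi}_J\boldsymbol{\epsilon}$ and likewise for $J_0$. The deterministic signal part is precisely what Proposition \ref{fln} controls: on $\mathcal{E}_{\delta,J\cup J_0}\cap\mathcal{A}$ we have $\|\hat{\Pi}_{J_0}f\|_n^2-\|\hat{\Pi}_{J}f\|_n^2\geq \tfrac12\tfrac{(1-\delta)^2}{(1+\delta)}(1-\rho_{q^*}^2)\kappa_l=:g_l$, a positive gap. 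So the event in question is contained in an event of the form
\[
\{2\langle \hat{\Pi}_{J_0}f-\hat{\Pi}_Jf,\boldsymbol{\epsilon}\rangle_n + \|\hat{\Pi}_J\boldsymbol{\epsilon}\|_n^2 - \|\hat{\Pi}_{J_0}\boldsymbol{\epsilon}\|_n^2 - \sigma^2(d_J-d_{J_0})/n \geq g_l\},
\]
after moving terms around; here the cross term $2\langle \hat{\Pi}_J f,\boldsymbol{\epsilon}\rangle_n$ versus $2\langle \hat{\Pi}_{J_0}f,\boldsymbol{\epsilon}\rangle_n$ must be tracked carefully, but since $\hat{\Pi}_{J_0}f=f$ on the relevant subspace only approximately (we actually have $f\notin V_{J_0}$ in general, but the projection residual is handled via $\mathcal{A}$), it is cleanest to use $\hat{\Pi}_J\mathbf{Y}=\hat{\Pi}_J(\hat{\Pi}_{J\cup J_0}\mathbf{Y})$ and reduce everything to projections living inside $V_{J\cup J_0}$.

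Conditioning on $X^1,\dots,X^n$ throughout, each $\|\hat{\Pi}_K\boldsymbol{\epsilon}\|_n^2$ is $(\sigma^2/n)$ times a $\chi^2$ with $\mathrm{rank}(\hat{\Pi}_K)\leq d_K$ degrees of freedom, and each cross term $\langle u,\boldsymbol{\epsilon}\rangle_n$ with $u$ deterministic is Gaussian with variance $(\sigma^2/n)\|u\|_n^2$. The standard tail bounds are: Laurent–Massart for the centered $\chi^2$ difference, $\mathbb{P}(\chi^2_d - d \geq 2\sqrt{dx}+2x)\leq e^{-x}$, applied with $d\leq d_J - \mathrm{rank}(\hat\Pi_{J_0}\hat\Pi_J)\leq d_{q^*-s+l}$ (the relevant dimension is the rank of the projection onto the orthogonal complement of the range of $\hat{\Pi}_{J_0}$ inside the range of $\hat{\Pi}_{J_0\cup J}$, which is bounded by $d_{|J\setminus J_0|}\leq d_{q^*-s+l}$ since $|J|\leq q^*$ and $|J\cap J_0|\geq s-l$); and the Gaussian tail for the cross term, whose standard deviation is controlled on $\mathcal{E}_{\delta,J\cup J_0}$ by a constant times $\sigma\sqrt{g_l/n}$ once one notes $\|\hat\Pi_{J_0}f-\hat\Pi_J f\|_n^2\leq$ const$\cdot\kappa$ and $g_l\asymp\kappa_l$. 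Splitting the gap $g_l$ into a piece absorbing the $\chi^2$ fluctuation and a piece absorbing the Gaussian cross term, one optimizes the split and obtains a bound of the form $4\exp(-c_1 (g_l n/\sigma^2)^2/(d_{q^*-s+l} + g_l n/\sigma^2))$, which is exactly the claimed form after substituting $g_l\asymp (1-\rho_{q^*}^2)\kappa_l$ and tracking that $c_1$ depends only on $\delta$. Finally one integrates out the conditioning on $X$, which is harmless since the bound is uniform.

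The main obstacle is bookkeeping: correctly identifying that the effective degrees of freedom in the $\chi^2$ difference is $\mathrm{rank}(\hat{\Pi}_J)-\mathrm{rank}(\hat{\Pi}_{J_0}\hat{\Pi}_J\text{-overlap})$ rather than $d_J$ itself — otherwise one would get $d_{q^*}$ in the denominator instead of the sharper $d_{q^*-s+l}$ — and checking that on $\mathcal{E}_{\delta,J\cup J_0}$ the relevant empirical quantities ($\|\hat\Pi_{J_0}f-\hat\Pi_Jf\|_n$ and ranks of the empirical projections) are comparable to their population counterparts, so that the gap from Proposition \ref{fln} genuinely dominates. A secondary point requiring care is that $f\notin V_{J_0}$, so one must split $f=v+(f-v)$ with $v=\sum_{j\in J_0}\Pi_{V_j}f_j$ as in Proposition \ref{fln}, carry the $\|f-v\|_n$ terms using the event $\mathcal{A}$, and verify the numerical condition \eqref{eq:ccond} leaves enough of the gap $\tfrac12(1-\delta)^2(1+\delta)^{-1}(1-\rho_{q^*}^2)\kappa_l$ intact after these perturbations — but this is exactly the content already packaged in Proposition \ref{fln}, so invoking it directly keeps the present proof to the Gaussian-deviation core.
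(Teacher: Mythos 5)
Your proposal follows essentially the same route as the paper's proof: rewrite the event as a deviation of the noise terms against the signal gap, lower-bound that gap via Proposition \ref{fln} on $\mathcal{E}_{\delta,J\cup J_0}\cap\mathcal{A}$, bound the projected-noise difference by centered chi-square tails with the reduced degrees of freedom $d_{J\setminus J_0},d_{J_0\setminus J}\leq d_{q^*-s+l}$, and handle the Gaussian cross term by splitting $f=v+(f-v)$ and using $\mathcal{A}$, yielding the four exponential terms. The approach and all the key technical points (in particular the dimension reduction to $d_{q^*-s+l}$ and the self-normalized treatment of $\langle \hat{\Pi}_Jv-v,\boldsymbol{\epsilon}\rangle_n$) match the paper's argument.
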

A proof of Lemma \ref{testdev} is given in Appendix \ref{adev}. Thus
\begin{align*}
&\mathbb{P}\left(  J_0\nsubseteq \hat{J}_0 \right)\leq \mathbb{P}\left(\mathcal{E}_{\delta,q^*}^c\right)+\mathbb{P}\left(\mathcal{A}^c\right)\\
&+\sum_{l=1}^{s}\sum_{m=0}^{q^*-(s-l)}\binom{s}{l}\binom{q-s}{m}
4\exp\left( -c_1\frac{n^2(1-\rho_{q^*}^2)^2\kappa_l^2}{\sigma^4d_{q^*-s+l} +\sigma^2n(1-\rho_{q^*}^2)\kappa_l}\right).
\end{align*}
Now apply Lemma \ref{testdev2}. This completes the proof.\qed

\appendix
\section{Proof of Remark \ref{angeqsin}}\label{efa}
Suppose that \eqref{eq:anglesepeq} holds, and let $h_{J_1}\in H_{J_1}$ and $h_{J_2}\in H_{J_2}$. Then $\Vert h_{J_1}+h_{J_2}\Vert^2\geq \Vert h_{J_1}\Vert^2-2\rho_{q^*}\Vert h_{J_1}\Vert\Vert h_{J_2}\Vert+\Vert h_{J_2}\Vert^2$ and \eqref{eq:mmm} follows from the inequality $2\rho_{q^*}\Vert h_{J_1}\Vert\Vert h_{J_2}\Vert\leq \rho_{q^*}^2\Vert h_{J_1}\Vert^2+\Vert h_{J_2}\Vert^2$.

Conversely, suppose that \eqref{eq:mmm} holds, and let $h_{J_1}\in H_{J_1}$ and $h_{J_2}\in H_{J_2}$. We may assume without loss of generality that $h_{J_2}\neq 0$ and that $\Vert h_{J_2}\Vert=1$. Then $\left\|h_{J_1}\right\|^2-\langle h_{J_1},h_{J_2}\rangle^2=\left\|h_{J_1}-\langle h_{J_1},h_{J_2}\rangle h_{J_2}\right\|^2\geq(1-\rho_{q^*}^2)\left\|h_{J_1}\right\|^2$
which gives \eqref{eq:anglesepeq}. This completes the proof.\qed

\section{Proof of Lemma \ref{ric}}\label{eas}
Let $J_1,J_2\subseteq\lbrace1,\dots,q\rbrace$ be two subsets  satisfying $J_1\cap J_2=\emptyset$ and $\vert J_1\vert, \vert J_2\vert\leq q^*$. Applying \eqref{eq:riceq} and \eqref{eq:ric2}, we see that
\[
\left\| f_{J_1}+f_{J_2}\right\|^2\geq \frac{1-\epsilon_{2q^*}}{1+\epsilon_{q^*}'}\left(\left\| f_{J_1}\right\|^2+\left\|f_{J_2}\right\|^2\right)
\]
for all $f_{J_1}\in H_{J_1}$, $f_{J_2}\in H_{J_2}$. Thus Remark \ref{angeqsin} gives the ``if'' part. Conversely, applying \eqref{eq:anglesepeq} iteratively, one gets for instance \[1-\epsilon_{2q^*}\geq (1-\rho_{q^*}^2)^{\log_2q^*+1}\]
which gives the ``only if'' part.\qed

\section{Proof of Lemma \ref{lemappr}}\label{plemappr}
Let $\sum_{k=1}^\infty \theta_k\phi_k\in \tilde{W}_j(\alpha_j,K_j)$. Then there is a constant $c_{\alpha_j}$ depending only on $\alpha_j$ such that (see, e.g., \cite[Proof of Lemma 1.8 and Theorem 1.9]{T})
\begin{equation}\label{eq:kg1}
\sum_{k>m_j}\theta_j^2\leq c_{\alpha_j}K_j^2m_j^{-2\alpha_j}
\end{equation}
and 
\begin{equation}\label{eq:kg2}
\left(\sum_{k>m_j}|\theta_j|\right)^2\leq c_{\alpha_j}K_j^2m_j^{1-2\alpha_j}.
\end{equation}
We now define $U_j=V_j+\mathbb{R}$, where $\mathbb{R}$ denotes the constant functions. Since $V_j$ and $\mathbb{R}$ are orthogonal for $j=1,\dots,q-1$, and since $V_q=U_q$, we have $\Pi_{V_j}h_j=\Pi_{U_j}h_j$ for $h_j\in H_j$ and $j=1,\dots,q$. 
Now, let $f_j\in H_j\cap\tilde{W}_j(\alpha_j,K_j)$. Then $f_j(x_j)=\sum_{k=1}^\infty \theta_k\phi_k(x_j)$. Let $q_j(x_j)=\sum_{j=1}^{m_j}\theta_k\phi_k(x_j)$. Then $q_j-\mathbb{E}[q_j(X_j)]\in V_j$ and thus $q_j\in U_j$. We conclude that
\begin{equation*}
\|f_j-\Pi_{V_j}f_j\|^2= \|f_j-\Pi_{U_j}f_j\|^2
\leq \|f_j-q_j\|^2\leq (1/c)\sum_{k>m_j}\theta_j^2	\label{eq:kg3}																	
\end{equation*}																		
and similarly that
\begin{align}
\|f_j-\Pi_{V_j}f_j\|_\infty^2&\leq 2\|f_j-q_j\|_\infty^2+2\|q_j-\Pi_{V_j}f_j\|_\infty^2\nonumber\\
																		&\leq 4\left(\sum_{k>m_j}|\theta_j|\right)^2+2(1/c) m_j\|q_j-\Pi_{U_j}f_j\|^2\nonumber\\
																		&\leq 4\left(\sum_{k>m_j}|\theta_j|\right)^2+2(1/c) m_j\|q_j-f_j\|^2\nonumber\\
																		&\leq 4\left(\sum_{k>m_j}|\theta_j|\right)^2+2(1/c)^2m_j\sum_{k>m_j}\theta_j^2\label{eq:kg4}
																		\end{align}	
Using \eqref{eq:kg1}-\eqref{eq:kg4}, we obtain Lemma~\ref{lemappr}.
This completes the proof.\qed

\section{Proof of Lemma \ref{testdev2}}\label{adev2} 
Let $v=\sum_{j\in J_0}v_j$ with $v_j=\Pi_{V_j}f_j$ for $j\in J_0$.
By \eqref{eq:af2}, we have \[\|f-v\|^2\leq c'(1-\rho_{q^*}^2)\kappa.\] Moreover, by Lemma \ref{lemappr} and the Cauchy-Schwarz inequality, we also have
\begin{equation}\label{eq:infa}\|f-v\|^2_\infty\leq q^*\sum_{j\in J_0}C_jK_j^2m_j^{1-2\alpha_j}\leq\frac{2d_{q^*}c(1-\rho_{q^*}^2)\kappa}{(1+\epsilon'_{q^*})}.\end{equation}
Thus, letting $x=c'(1-\rho_{q^*}^2)\kappa$, Bennett's inequality (see, e.g., \cite[Comment after Proposition 2.8]{M}) yields
\begin{align*}
\mathbb{P}\left(\|f-v\|^2_n> 2x\right)&\leq \mathbb{P}\left(\|f-v\|^2_n-\|f-v\|^2> x\right)\\
&\leq \exp\left(-\frac{nx^2}{2\|(f-v)^2\|^2+(2/3)\|f-v\|^2_\infty x}\right)\\
&\leq \exp\left(-\frac{3nx}{8\|f-v\|^2_\infty }\right).
\end{align*}
Using this and \eqref{eq:infa}, we obtain \eqref{eq:mn}. This completes the proof \qed

\section{Proof of Proposition \ref{rudver}}\label{rudverpr}
The proof is taken from \cite[proof of Theorem 3.6]{RV2} (see also \cite[proof of Theorem 8.1]{R}).  However, we have to modify several details. For $j=1,\dots, q-1$, the spaces $V_j$ are spanned by the functions $\psi_{jk}(x_j)=\phi_k(x_j)-\mathbb{E}[\phi_k(X_j)]$, $2\leq k\leq m_j$, and the space $V_q$ is spanned by $\psi_{qk}(x_q)=\phi_k(x_q)$, $1\leq k\leq m_q$. Thus each function in $\sum_{j=1}^qV_j$, can be written as 
$g_\alpha=\sum_{j,k}\alpha_{jk}\psi_{jk}$,
for some $\alpha=(\alpha_1^T,\dots,\alpha_q^T)^T\in\mathbb{R}^{d_q}$. Letting 
 \[T=\left\{\alpha\in\mathbb{R}^{d_q}:g_\alpha\in V_J,|J|\leq 2q^*,\|g_\alpha\|\leq 1\right\},\]
we have to show that there is a constant $C_1>0$ such that
\[E:=\mathbb{E}\left[\sup_{\alpha \in T}\left|\|g_\alpha\|_n^2-\|g_\alpha\|^2\right|\right]\leq C_1\sqrt{\frac{d_{2q*}}{c(1-\epsilon_{2q^*})n}}\log^2(d_q\vee n),\]
provided that the last expression is smaller than $1$.
Using the symmetrization lemma (see, e.g., \cite[Lemma 2.3.1]{VW}), we obtain
\[E\leq 2\mathbb{E}\left[\sup_{\alpha \in T}\frac{1}{n}\sum_{i=1}^n\delta^ig_{\alpha}^2(X^i)\right],\]
where $\delta^1,\dots,\delta^n$ are independent Rademacher random variables. Applying \cite[Corollary 2.2.8]{VW}, we have for a universal constant $C_2$,
\[E_1:=\mathbb{E}\left[\sup_{\alpha \in T}\frac{1}{n}\sum_{i=1}^n\delta^ig_{\alpha}^2(X^i)\bigg|X^1,\dots,X^n\right]\leq C_2\int_0^\infty\sqrt{\log N(T,d,u)}du,\]
where $N(T,d,u)$ denotes the minimal number of balls of radius $u$ in the semimetric $d$ needed to cover $T$ and $d$ is the given by 
\[d(\alpha,\beta)=\left(\frac{1}{n^2}\sum_{i=1}^n\left(g_\alpha^2(X^i)-g_\beta^2(X^i)\right)^2\right)^{1/2}.\]
Now,
\begin{align*}
d(\alpha,\beta)&\leq \left(\frac{1}{n^2}\sum_{i=1}^n\left(g_\alpha(X^i)+g_\beta(X^i)\right)^2\right)^{1/2}\max_{i=1,\dots,n}\left|g_\alpha(X^i)-g_\beta(X^i)\right|\\
&\leq \frac{2}{\sqrt{n}}\sup_{\alpha\in T}\|g_\alpha\|_n\max_{i=1,\dots,n}\left|g_\alpha(X^i)-g_\beta(X^i)\right|.
\end{align*}
Applying a linear change of variables, we obtain
\[E_1\leq \sup_{\alpha\in T}\|g_\alpha\|_n\frac{2C_2}{\sqrt{n}}\int_0^\infty\sqrt{\log N(T,\|\cdot\|_X,u)}du,\]
where the seminorm $\|\cdot\|_X$ is given by
\[\|\alpha\|_X=\max_{i=1,\dots,n}\left|g_\alpha(X^i)\right|=\max_{i=1,\dots,n}\left|\left\langle \alpha,x_i\right\rangle\right|.\] 
Here, the $x_i$ are the vectors of the basis functions evaluated at $X^i$. Note that the $x_i$ are uniformly bounded by $K=2\sqrt{2}$ and that the last expression coincides with the definition of $\|\cdot\|_X$ in \cite{RV2}.
Now, if $\alpha\in T$, then
\[\|\alpha\|_0=|\{j:\alpha_j\neq 0\}|\leq d_{2q^*}\]
and
\[\|\alpha\|_2\leq\frac{1}{\sqrt{c(1-\epsilon_{2q^*})}}.\]
The first inequality follows from the definition, the second one from Assumption \ref{hass}, \eqref{eq:riceq}, and $\|g_\alpha\|\leq 1$. Thus 
\[T\subseteq \frac{1}{\sqrt{c(1-\epsilon_{2q^*})}}D_2^{d_{2q^*},d_q},\]
where
\[D_2^{d_{2q^*},d_q}=\left\{\alpha\in\mathbb{R}^{d_q}:\|\alpha\|_0\leq d_{2q^*},\|\alpha\|_2\leq 1\right\}.\] 
Applying again a linear change of variables, we obtain
\begin{align*}
&E_1\leq\\& \sup_{\alpha\in T}\|g_\alpha\|_nC_2\sqrt{\frac{d_{2q^*}}{c(1-\epsilon_{2q^*})n}}\int_0^\infty\log^{1/2} N\left(\frac{1}{\sqrt{d_{2q^*}}}D_2^{d_{2q^*},d_q},\|\cdot\|_X,u\right)du.
\end{align*}
The above integral is the same as in \cite[(3.7)]{RV2} and can be bounded by $C_3\log(d_{2q^*})\sqrt{\log n}\sqrt{\log d_q}\leq C_3\log^2(d_q\vee n)$ (here, we use that the $x_i$ are uniformly bounded by $2\sqrt{2}$). We conclude that
\[E_1\leq C(q^*,q,n)\sup_{\alpha\in T}\|g_\alpha\|_n,\]
where
\[C(q^*,q,n)=C_2C_3\sqrt{\frac{d_{2q^*}}{c(1-\epsilon_{2q^*})n}}\log^2(d_q\vee n)\]
Using this and the Cauchy-Schwarz inequality, we obtain
\begin{align*}E&\leq C(q^*,q,n)\left(\mathbb{E}\left[\sup_{\alpha\in T}\|g_\alpha\|_n^2\right]\right)^{1/2}\\
&\leq C(q^*,q,n)\left(E+1\right)^{1/2}.
\end{align*}
If 
\[C(q^*,q,n)\leq 1,\]
then we get
\[E\leq 2C(q^*,q,n).\]
This completes the proof.\qed

\section{Proof of Equation \eqref{eq:zbl}}\label{zb} 
Let $g=\sum_{k=1}^m\theta_k\phi_k$. By the Cauchy-Schwarz inequality, we have  
\[
\left\|g\right\|_\infty^2\leq m\sum_{k=1}^m\theta_k^2=m\int_0^1g^2(x)dx.
\]
This implies that
\begin{equation}\label{eq:lll1}
\left\|g_j\right\|_\infty^2\leq(2/c)\dim V_j\left\|g_j\right\|^2
\end{equation}
for all $g_j\in V_j$.
Now, let $J\subseteq\lbrace1,\dots,q\rbrace$ be a subset with $|J|\leq 2q^*$. Applying \eqref{eq:lll1}, the Cauchy-Schwarz inequality, and Lemma \ref{ric}, we obtain
\begin{align*}
\left\| g_{J}\right\|_\infty\leq\sum_{j\in {J}}\left\|g_j\right\|_\infty
&\leq \sqrt{2/c}\sqrt{\sum_{j\in {J}}\dim V_j}\sqrt{\sum_{j\in {J}}\left\|g_j\right\|^2}\\
&\leq \sqrt{\frac{2}{c(1-\epsilon_{2q^*})}}\sqrt{\dim V_J}\left\|g_{J}\right\|
\end{align*}
for all $g_{J}=\sum_{j\in {J}}g_j\in V_{J}$. This completes the proof.\qed	

\section{Proof of Proposition \ref{fln}}\label{appa}
Let $v=\sum_{j\in J_0}v_j$ with $v_j=\Pi_{V_j}f_j$ for $j\in J_0$. We have
\begin{align}
&\Vert\hat{\Pi}_{J_0}f\Vert_n^2-\Vert \hat{\Pi}_{J}f\Vert_n^2\nonumber\\
&= \Vert\hat{\Pi}_{J_0}v\Vert_n^2+2\langle \hat{\Pi}_{J_0}v,\hat{\Pi}_{J_0}(f-v) \rangle_n+\Vert\hat{\Pi}_{J_0}(f-v)\Vert_n^2\nonumber\\
&-\Vert\hat{\Pi}_{J}v\Vert_n^2-2\langle \hat{\Pi}_{J}v,\hat{\Pi}_{J}(f-v) \rangle_n-\Vert\hat{\Pi}_{J}(f-v)\Vert_n^2\nonumber\\
&\geq \Vert\hat{\Pi}_{J_0}v\Vert_n^2-\Vert\hat{\Pi}_{J}v\Vert_n^2+2\langle \hat{\Pi}_{J_0}v-\hat{\Pi}_{J}v,f-v \rangle_n-\|f-v\|^2_n,\label{eq:fg1}
\end{align}
where the inequality holds since orthogonal projections are self-adjoint and lower the norm.
Since $\hat{\Pi}_{J_0}v=v$ and $\|v-\hat{\Pi}_{J}v\|_n^2=\|v\|_n^2-\|\hat{\Pi}_{J}v\|_n^2$, we get
\begin{align} 
&2\langle \hat{\Pi}_{J_0}v-\hat{\Pi}_{J}v,f-v \rangle_n\nonumber\\
&\leq (1/3)\|\hat{\Pi}_{J_0}v-\hat{\Pi}_{J}v\|_n^2+3\|f-v\|_n^2\nonumber\\
&= (1/3)\left(\Vert\hat{\Pi}_{J_0}v\Vert_n^2-\Vert\hat{\Pi}_{J}v\Vert_n^2\right)+3\|f-v\|_n^2,\label{eq:fg2}
\end{align}
where we also applied the bound $2xy\leq 3x^2+(1/3)y^2$. Combining \eqref{eq:fg1} and \eqref{eq:fg2}, we conclude that
\begin{equation}\label{eq:pyt}
\Vert\hat{\Pi}_{J_0}f\Vert_n^2-\Vert \hat{\Pi}_{J}f\Vert_n^2
\geq (2/3)\left(\Vert\hat{\Pi}_{J_0}v\Vert_n^2-\Vert\hat{\Pi}_{J}v\Vert_n^2\right)-4\|f-v\|_n^2.
\end{equation}
If $\mathcal{E}_{\delta,J\cup J_0}$ holds, then Proposition \ref{fundlemem} says that
\begin{equation*}
\big\Vert\hat{\Pi}_{J_0}v\big\Vert_n^2-\big\Vert \hat{\Pi}_{J}v\big\Vert_n^2
\geq\frac{(1-\delta)}{(1+\delta)}(1-\rho_{q^*}^2)\Vert  v_{J_0\setminus J}\Vert_n^2,
\end{equation*}
where $v_{J_0\setminus J}=\sum_{j\in J_0\setminus J}v_j$. If $\mathcal{E}_{\delta,J_0}$ holds, then 
\[
\Vert v_{J_0\setminus J}\Vert_n^2\geq (1-\delta)\Vert v_{J_0\setminus J}\Vert^2\geq (1-\delta)\left(\Vert f_{J_0\setminus J}\Vert-\Vert f_{J_0\setminus J}-v_{J_0\setminus J}\Vert\right)^2,
\]
where $f_{J_0\setminus J}=\sum_{j\in J_0\setminus J}f_j$.
As in \eqref{eq:af2}, we have 
\[\Vert f_{J_0\setminus J}-v_{J_0\setminus J}\Vert^2\leq c'(1-\rho_{q*}^2)\kappa\leq c'\kappa_l.\] 
Thus
\begin{equation*}
\Vert v_{J_0\setminus J}\Vert_n^2\geq (1-\delta)(1-\sqrt{c'})^2\kappa_l\label{eq:com}
\end{equation*}
If $\mathcal{E}_{\delta,J\cup J_0}$ holds, then we obtain
\begin{equation}\label{eq:pyt2}
\big\Vert\hat{\Pi}_{J_0}v\big\Vert_n^2-\big\Vert \hat{\Pi}_{J}v\big\Vert_n^2
\geq(1-\sqrt{c'})^2\frac{(1-\delta)^2}{(1+\delta)}(1-\rho_{q^*}^2)\kappa_l.
\end{equation}
If $\mathcal{E}_{\delta,J\cup J_0}\cap\mathcal{A}$ holds, then we conclude from \eqref{eq:pyt} and \eqref{eq:pyt2} that 
\begin{equation*}
\Vert\hat{\Pi}_{J_0}f\Vert_n^2-\Vert \hat{\Pi}_{J}f\Vert_n^2
\geq\frac{1}{2}\frac{(1-\delta)^2}{(1+\delta)}(1-\rho_{q*}^2)\kappa_l, 
\end{equation*} 
provided that \eqref{eq:ccond} is satisfied. This completes the proof.\qed

\section{Proof of Lemma \ref{testdev}}\label{adev}
We have
\[\Vert\hat{\Pi}_J\mathbf{Y}\Vert^2_n-\sigma^2d_J/n\geq\Vert\hat{\Pi}_{J_0}\mathbf{Y}\Vert^2_n-\sigma^2d_{J_0}/n\]
if and only if
\begin{align*}
&\Vert\hat{\Pi}_J\boldsymbol{\epsilon}\Vert^2_n-\Vert\hat{\Pi}_{J_0}\boldsymbol{\epsilon}\Vert^2_n-\sigma^2d_J/n+\sigma^2d_{J_0}/n+2\langle (\hat{\Pi}_J-\hat{\Pi}_{J_0})f,\boldsymbol{\epsilon}\rangle_n\\&\geq\Vert\hat{\Pi}_{J_0}f\Vert^2_n-\Vert\hat{\Pi}_Jf\Vert^2_n.
\end{align*}
If $\mathcal{E}_{\delta,J\cup J_0}\cap \mathcal{A}$ holds, then \eqref{eq:pyt}, \eqref{eq:ccond}, and \eqref{eq:thrdvers} yield
\begin{equation*}
\Vert\hat{\Pi}_{J_0}f\Vert_n^2-\Vert \hat{\Pi}_{J}f\Vert_n^2
\geq\frac{1}{2}\frac{(1-\delta)^2}{(1+\delta)}(1-\rho_{q^*}^2)\kappa_l
\end{equation*} 
and also
\begin{equation*}
\Vert\hat{\Pi}_{J_0}f\Vert_n^2-\Vert \hat{\Pi}_{J}f\Vert_n^2
\geq\frac{1}{2(1-\sqrt{c'})^2}\Vert v-\hat{\Pi}_{J}v\Vert_n^2\geq\frac{1}{2}\Vert v-\hat{\Pi}_{J}v\Vert_n^2.
\end{equation*}
Recall that the random variables $\epsilon^1,\dots,\epsilon^n$ are independent and Gaussian, each with expectation $0$ and variance $\sigma^2$. Moreover, they are independent of $X^1,\dots,X^n$. One can show that, conditioned on $X^1,\dots,X^n$ and if $\mathcal{E}_{\delta,J\cup J_0}$ holds, we have
\begin{equation*}
\Vert\hat{\Pi}_J\boldsymbol{\epsilon}\Vert^2_n-\Vert\hat{\Pi}_{J_0}\boldsymbol{\epsilon}\Vert^2_n\stackrel{d}{=}(\sigma^2/n)\chi^2(d_{J\setminus J_0})-(\sigma^2/n)\chi^2(d_{J_0\setminus J}),
\end{equation*}
where $\stackrel{d}{=}$ denotes equality in distribution, and
where $\chi^2(d_{J\setminus J_0})$ and $\chi^2(d_{J_0\setminus J})$ are chi-square distributed random variables with $d_{J\setminus J_0}$ and $d_{J_0\setminus J}$ degrees of freedom, respectively. 
Applying all these arguments and the union bound, we conclude that
\begin{align*}
\mathbb{P}&\left(\mathcal{E}_{\delta,J\cup J_0}\cap\mathcal{A}\cap \Vert\hat{\Pi}_J\mathbf{Y}\Vert^2_n-\sigma^2d_J/n\geq\Vert\hat{\Pi}_{J_0}\mathbf{Y}\Vert^2_n-\sigma^2d_{J_0}/n\right)\\
&\leq\mathbb{P}\left( \frac{\sigma^2}{n}\left(\chi^2(d_{J\setminus J_0})-d_{J\setminus J_0}\right)\geq\frac{1}{8}\frac{(1-\delta)^2}{(1+\delta)}(1-\rho_{q^*}^2)\kappa_l\right)\\
&+\mathbb{P}\left(\frac{\sigma^2}{n}\left(\chi^2(d_{J_0\setminus J})-d_{J_0\setminus J}\right)\leq -\frac{1}{8}\frac{(1-\delta)^2}{(1+\delta)}(1-\rho_{q^*}^2)\kappa_l\right)\\
&+\mathbb{P}\left(\mathcal{E}_{\delta,J\cup J_0}\cap\mathcal{A}\cap 2\langle (\hat{\Pi}_J-\hat{\Pi}_{J_0})f,\boldsymbol{\epsilon}\rangle_n\geq\frac{1}{4}\Vert v-\hat{\Pi}_{J}v\Vert_n^2\right).
\end{align*}
The first and the second term can be bounded by standard concentration inequalities for chi-square distributions.
\begin{lemma}\label{gh} Let $d$ be a positive integer. Then, for all $x\geq 0$, we have
\begin{equation*}
\mathbb{P}\left( \chi^2(d)-d\geq x\right)\leq \exp\left(-\frac{x^2}{2(2d+2x)} \right)
\end{equation*}
and
\begin{equation*}
\mathbb{P}\left( \chi^2(d)-d\leq -x\right)\leq \exp\left(-\frac{x^2}{4d} \right).
\end{equation*}
\end{lemma}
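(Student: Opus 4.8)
The plan is to prove both inequalities by the Cram\'er--Chernoff method, using the explicit Laplace transform of the chi-square distribution. Writing $\chi^2(d)=\sum_{i=1}^d\xi_i^2$ with $\xi_1,\dots,\xi_d$ independent standard Gaussian random variables, one has $\mathbb{E}\bigl[e^{t\chi^2(d)}\bigr]=(1-2t)^{-d/2}$ for $0\leq t<1/2$, and $\mathbb{E}\bigl[e^{-t\chi^2(d)}\bigr]=(1+2t)^{-d/2}$ for all $t\geq 0$.

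For the upper tail I would apply Markov's inequality to $e^{t(\chi^2(d)-d)}$ with $0<t<1/2$, obtaining
\[\mathbb{P}\bigl(\chi^2(d)-d\geq x\bigr)\leq\exp\Bigl(-t(d+x)-\tfrac{d}{2}\log(1-2t)\Bigr).\]
Minimising the exponent over $t$ yields the choice $t=x/(2(d+x))$, which lies in $(0,1/2)$, and for which $1-2t=d/(d+x)$; the exponent then becomes $-\tfrac{x}{2}+\tfrac{d}{2}\log(1+x/d)$. It remains to invoke the elementary inequality $\log(1+u)\leq u-\tfrac{u^2}{2(1+u)}$ for $u\geq 0$, which holds because the function $u\mapsto u-\tfrac{u^2}{2(1+u)}-\log(1+u)$ vanishes at $u=0$ and has derivative $u^2/(2(1+u)^2)\geq 0$. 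Applied with $u=x/d$ this gives $-\tfrac{x}{2}+\tfrac{d}{2}\log(1+x/d)\leq -x^2/(4(d+x))=-x^2/(2(2d+2x))$, which is the first claimed bound.

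For the lower tail, if $x\geq d$ the event $\{\chi^2(d)\leq d-x\}$ has probability $0$ and the bound is trivial, so I may assume $0\leq x<d$. Applying Markov's inequality to $e^{-t(\chi^2(d)-d)}$ with $t>0$ gives
\[\mathbb{P}\bigl(\chi^2(d)-d\leq -x\bigr)\leq\exp\Bigl(-t(d-x)-\tfrac{d}{2}\log(1+2t)\Bigr).\]
Optimising over $t$ leads to $t=x/(2(d-x))>0$, so that $1+2t=d/(d-x)$ and the exponent equals $-\tfrac{x}{2}+\tfrac{d}{2}\log(1-x/d)$. The proof is then completed by the inequality $\log(1-u)\leq u-\tfrac{u^2}{2}$ for $0\leq u<1$, which again follows from the fact that the function $u\mapsto u-\tfrac{u^2}{2}-\log(1-u)$ vanishes at $u=0$ and has positive derivative on $[0,1)$; with $u=x/d$ this shows the exponent is at most $-x^2/(4d)$.

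The computation is entirely routine, as these are just the standard Bernstein-type deviation bounds for a sum of independent sub-exponential variables, and in fact they coincide with the classical estimates of Laurent and Massart. I do not anticipate any genuine obstacle; the only points needing minor care are checking that the minimising values $x/(2(d+x))$ and $x/(2(d-x))$ lie in the admissible ranges ($t<1/2$ for the upper tail, $t>0$ for the lower tail) and verifying the two scalar inequalities for the logarithm.
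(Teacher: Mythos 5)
The paper does not actually prove this lemma: it simply cites Laurent--Massart (Lemma 1) and Birg\'e--Massart (Lemma 8). Your Cram\'er--Chernoff argument via the explicit chi-square Laplace transform is precisely the standard proof underlying those citations, and the upper-tail half is fully correct: the optimal exponent $-\tfrac{x}{2}+\tfrac{d}{2}\log(1+x/d)$ together with $\log(1+u)\leq u-\tfrac{u^2}{2(1+u)}$ does give $-x^2/(2(2d+2x))$.

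There is, however, a sign slip in the lower-tail display. Markov's inequality applied to $e^{-t(\chi^2(d)-d)}$ gives
\begin{equation*}
\mathbb{P}\bigl(\chi^2(d)-d\leq -x\bigr)\leq e^{-tx}\,\mathbb{E}\bigl[e^{-t(\chi^2(d)-d)}\bigr]=\exp\Bigl(+t(d-x)-\tfrac{d}{2}\log(1+2t)\Bigr),
\end{equation*}
with a plus sign on $t(d-x)$; as you wrote it, the bound would tend to $0$ as $t\to\infty$ for $0<x<d$, while the left-hand side is a fixed positive number, so the displayed inequality is false and the subsequent ``optimisation'' is not the minimisation of that expression. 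With the correct sign, $t=x/(2(d-x))$ is indeed the minimiser and the exponent becomes $+\tfrac{x}{2}+\tfrac{d}{2}\log(1-x/d)$, so you need the slightly stronger scalar inequality $\log(1-u)\leq -u-\tfrac{u^2}{2}$ for $0\leq u<1$ (immediate from $\log(1-u)=-\sum_{k\geq1}u^k/k$), rather than $\log(1-u)\leq u-\tfrac{u^2}{2}$. This again yields exactly $-x^2/(4d)$, so the conclusion stands; the fix is purely local.
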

For a proof of this lemma see \cite[Lemma 1]{LM} and \cite[Lemma 8]{BM}. 
Since $|J_0\setminus J|=l$ and $|J\setminus J_0|\leq q^*-s+l$, we have $d_{J_0\setminus J},d_{J\setminus J_0}\leq d_{q^*-s+l}$. Applying this and Lemma \ref{gh}, we obtain
\begin{align}
&\mathbb{P}\left( \frac{\sigma^2}{n}\left(\chi^2(d_{J\setminus J_0})-d_{J\setminus J_0}\right)\geq\frac{1}{8}\frac{(1-\delta)^2}{(1+\delta)}(1-\rho_{q^*}^2)\kappa_l\right)\nonumber\\
&+\mathbb{P}\left(\frac{\sigma^2}{n}\left(\chi^2(d_{J_0\setminus J})-d_{J_0\setminus J}\right)\leq -\frac{1}{8}\frac{(1-\delta)^2}{(1+\delta)}(1-\rho_{q^*}^2)\kappa_l\right)\nonumber\\
&\leq  2\exp\left( -\frac{1}{32}\frac{c_\delta^2n^2(1-\rho_{q^*}^2)^2\kappa_l^2}{8\sigma^4d_{q^*-s+l} +c_\delta\sigma^2n(1-\rho_{q^*}^2)\kappa_l}\right),\label{eq:chisqev}
\end{align}
where $c_\delta=(1-\delta)^2/(1+\delta)$.
Thus it remains the third term.
It can be bounded by
\begin{align}
&\mathbb{P}\left(\mathcal{E}_{\delta,J\cup J_0}\cap \langle \hat{\Pi}_{J}v-v,\boldsymbol{\epsilon}\rangle_n\geq\frac{1}{16}\Vert v-\hat{\Pi}_{J}v\Vert_n^2\right)\nonumber \\
+&\mathbb{P}\left(\mathcal{E}_{\delta,J\cup J_0}\cap\mathcal{A}\cap \langle (\hat{\Pi}_{J}-\hat{\Pi}_{J_0})(f-v),\boldsymbol{\epsilon}\rangle_n\geq\frac{1}{16}\Vert v-\hat{\Pi}_{J}v\Vert_n^2\right)\label{eq:secte}.
\end{align}
These terms can be bounded by standard concentration inequalities for Gaussian random variables.
Applying \eqref{eq:pyt2}, we obtain
\begin{align*}
\mathbb{P}&\left(\mathcal{E}_{\delta,J\cup J_0}\cap \langle \hat{\Pi}_{J}v-v,\boldsymbol{\epsilon}\rangle_n\geq\frac{1}{16}\Vert v-\hat{\Pi}_{J}v\Vert_n^2\right)\\
&\leq \mathbb{E}\left[1_{\mathcal{E}_{\delta,J\cup J_0}}\exp\left(-\frac{n}{2^9}\frac{\Vert v-\hat{\Pi}_{J}v\Vert_n^2}{\sigma^2}\right)\right]\\
&\leq \exp\left(-\frac{c_\delta}{2^{10}}\frac{n(1-\rho_{q^*}^2)\kappa_l}{\sigma^2}\right),
\end{align*}
which bounds the first term in \eqref{eq:secte}.
If $\mathcal{A}$ holds, then
\begin{equation*}\|(\hat{\Pi}_{J}-\hat{\Pi}_{J_0})(f-v)\|_n^2\leq 4\|f-v\|_n^2\leq 8c'(1-\rho_{q^*}^2)\kappa\leq 8c'(1-\rho_{q^*}^2)\kappa_l.
\end{equation*}
Applying this and \eqref{eq:pyt2}, we obtain 
\begin{align*}
&\mathbb{P}\left(\mathcal{E}_{\delta,J\cup J_0}\cap\mathcal{A}\cap \langle (\hat{\Pi}_{J}-\hat{\Pi}_{J_0})(f-v),\boldsymbol{\epsilon}\rangle_n\geq\frac{1}{16}\Vert v-\hat{\Pi}_{J}v\Vert_n^2\right)\\
&\leq \mathbb{P}\left(\mathcal{E}_{\delta,J\cup J_0}\cap\mathcal{A}\cap \langle (\hat{\Pi}_{J}-\hat{\Pi}_{J_0})(f-v),\boldsymbol{\epsilon}\rangle_n\geq\frac{1}{32}\frac{(1-\delta)^2}{(1+\delta)}(1-\rho_{q*}^2)\kappa_l\right)\\
&\leq\exp\left(-\frac{c_\delta^2}{2^{14}c'}\frac{n(1-\rho_{q*}^2)\kappa_l}{\sigma^2}\right)
\end{align*}
which bounds the second term in \eqref{eq:secte}.
This completes the proof.\qed

\section*{Acknowledgements}
Finally, I sincerely would like to thank Prof. Enno Mammen for his support during the preparation of this paper.

\bibliographystyle{plain}
\bibliography{lit}
\end{document}